\newtheorem{thm}{Theorem}[section]
\newtheorem{cor}[thm]{Corollary}
\newtheorem{lm}[thm]{Lemma}
\newtheorem{prop}[thm]{Proposition}
\newtheorem*{con}{Conjecture}
\newtheorem*{thmintro}{Theorem}
\theoremstyle{definition}
\newtheorem{rmk}[thm]{Remark}
\newtheorem{example}[thm]{Example}
\numberwithin{equation}{section}
\def\NN{{\mathbb{N}}}
\def\CF{{\cal F}}
\def\ra{\rightarrow}
\renewcommand*{\mod}{\mathrm{mod}}
\newcommand{\La}{\Lambda}
\def\id{\mbox{{\rm id}}}
\def\soc{\textnormal{soc}}
\DeclareMathOperator{\T}{T}
\DeclareMathOperator{\ob}{ob}
\DeclareMathOperator{\KG}{KG}
\DeclareMathOperator{\op}{op}
\newcommand{\ba}{\bar{\alpha}}
\newcommand\blfootnote[1]{%
  \begingroup
  \renewcommand\thefootnote{}\footnote{#1}%
  \addtocounter{footnote}{-1}%
  \endgroup
  }
\begin{document}

\title{On Krull-Gabriel dimension of weighted surface algebras}

\date{}
\author{K. Erdmann$^{a}$, A. Jaworska-Pastuszak$^{b,}$\footnote{Corresponding author }\;, G. Pastuszak$^{b}$}

\maketitle

{\footnotesize 
\noindent$^{a}$  Mathematical Institute, 
University of Oxford, ROQ, Oxford OX2 6GG, United Kingdom\\
$^{b}$ Faculty of Mathematics and Computer Science, Nicolaus
Copernicus University, Chopina 12/18, 87-100 Toru\'n, Poland}

\blfootnote{\noindent E-mail addresses:
    \noindent erdmann@maths.ox.ac.uk (K.~Erdmann), 
 jaworska@mat.umk.pl (A.~Jaworska-Pastuszak), past@mat.umk.pl (G.~Pastuszak)}

\begin{abstract}
We determine the Krull-Gabriel dimension of weighted surface algebras, a class of algebras which recently appeared in the context of classification of tame symmetric periodic algebras of non-polynomial growth. Moreover, we consider Krull-Gabriel dimension of idempotent algebras of weighted surface algebras and generalize the result in some cases.
\bigskip

\noindent
\textit{Keywords:} Krull-Gabriel dimension, weighted surface algebras, Galois coverings, hybrid algebras, periodic algebras, selfinjective algebras, symmetric algebras\\

\noindent
\textit{2020 MSC:}
16D50, 16G20, 16G60, 16G70, 18E10
\end{abstract}

\section{Introduction}

Let $K$ be an algebraically closed field. Throughout the paper, by an algebra we mean a basic indecomposable finite dimensional associative $K$-algebra with identity. For basic background on the representation theory of algebras we refer to \cites{ASS, SiSk3}, in particular, for the definitions of tame and wild representation type, as well as the stratification of tame representation type into domestic, polynomial and non-polynomial growth.

It is well known that any basic algebra is isomorphic to a bound quiver $K$-algebra. Moreover, we may view a bound quiver $K$-algebra as a finite locally bounded $K$-category \cite{BoGa}. Let $R$ be a locally bounded $K$-category. By $\mod(R)$ we denote the category of all finitely generated right $R$-modules and by $\mod(K)$ the category of all finite dimensional $K$-vector spaces. Let $\CF(R)$ be the category of all finitely presented contravariant  $K$-linear functors $\mod(R) \ra \mod(K)$, see \cite{Pr} for details on functor categories. This category, as an essentially small abelian $K$-category, admits the \emph{Krull-Gabriel filtration} \cite{Ge2}
$$\CF(R)_{-1}\subseteq\CF(R)_{0}\subseteq\CF(R)_{1}\subseteq\hdots\subseteq\CF(R)_{\alpha}\subseteq\CF(R)_{\alpha+1}\subseteq\hdots$$ 
by Serre subcategories, defined recursively as follows:
\begin{enumerate}[\rm(1)]
	\item $\CF(R)_{-1}=0$ and $\CF(R)_{\alpha+1}$ is the Serre subcategory of $\CF(R)$ formed by all functors having finite length in the quotient
category $\CF(R)\slash\CF(R)_{\alpha}$, for any ordinal number $\alpha$,
	\item $\CF(R)_{\beta}=\bigcup_{\alpha<\beta}\CF(R)_{\alpha}$, for any limit ordinal $\beta$.
\end{enumerate}
Following \cites{Ge1, Ge2}, the \emph{Krull-Gabriel dimension} $\KG(R)$ of $R$ is the smallest ordinal number $\alpha$ such that $\CF(R)_{\alpha}=\CF(R)$, if such
a number exists, and $\KG(R)=\infty$ otherwise. The Krull-Gabriel dimension of $R$ is \emph{finite} if $\KG(R)\in\NN$, and it is said to be \emph{undefined} if $\KG(R)=\infty$. 
Our interest in the Krull-Gabriel dimension is related with the following conjecture of Prest \cite{Pr}.
\begin{con}
Let $A$ be a finite dimensional algebra over an algebraically closed field $K$. Then $A$ is of domestic representation type if and only if  the Krull-Gabriel dimension $\KG(A)$ of $A$ is finite.
\end{con}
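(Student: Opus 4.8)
The statement is Prest's conjecture, a well-known open problem, so rather than a complete proof the realistic plan is to describe the strategy for each of the two implications and to isolate why no uniform argument is currently available. I would split the equivalence into the implications ``$A$ domestic $\Rightarrow \KG(A)$ finite'' and ``$A$ non-domestic $\Rightarrow \KG(A)=\infty$'', and treat them by quite different methods, since finiteness and infiniteness of $\KG$ are certified by opposite kinds of information about $\mod(A)$.

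For the forward implication, the plan is to reduce to classes where $\mod(A)$ is understood explicitly. A domestic algebra is in particular tame, and after passing through a Galois covering or a tilting/derived equivalence its module category is typically governed by a tame concealed or tame hereditary algebra, for which $\KG$ is known to be finite and small (by Geigle's computation, equal to $2$ in the tame hereditary case). The essential technical input is that $\KG$ behaves controllably under such reductions: first I would establish that a Galois covering, together with the associated push-down and pull-up functors, induces a comparison (ideally an equality) between the Krull-Gabriel filtrations $\CF(R)_{\alpha}$ of the covering $R$ and those of $A$. Granting this, finiteness of $\KG$ for the ``model'' algebra transfers back to $A$.

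For the converse, the plan is to show that the filtration never stabilizes by producing, for every natural number $n$, a finitely presented functor whose image in the quotient $\CF(A)\slash\CF(A)_{n-1}$ fails to have finite length. Equivalently, I would exhibit a densely ordered chain in the lattice of pp-formulas, whose $m$-dimension coincides with $\KG(A)$, since the presence of such a chain already forces $\KG(A)=\infty$; the witnessing data is the combinatorics of the families of modules whose parametrization reflects non-domesticity, read off from the Auslander--Reiten quiver, or, for the weighted surface algebras of this paper, from a suitable Galois covering. This is precisely the direction that the present paper realizes for weighted surface algebras, in the spirit of Schröer's verification of the conjecture for string algebras.

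The hard part will be that neither the domestic nor the non-domestic algebras admit a structural normal form fine enough to run either argument in full generality, which is exactly why the conjecture must presently be verified class by class. The deepest obstruction is the forward implication: finiteness of $\KG$ is a strong global property of $\CF(A)$ that we can currently certify only when we have explicit control of $\mod(A)$ -- through Galois coverings, string combinatorics, or the tameness classification -- rather than as a formal consequence of domesticity alone.
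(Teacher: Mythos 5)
You have correctly recognized that this statement is Prest's conjecture, an open problem: the paper does not prove it (it cannot), but only states it as the motivating \textbf{Conjecture} and then \emph{supports} it by verifying both directions vacuously-compatibly for one class of algebras. Since there is no ``paper's own proof'' to match, the right comparison is between your strategic outline and the paper's actual class-by-class verification, and there your sketch of the non-domestic direction is essentially what the paper does. For a weighted surface algebra $\Lambda$, the paper shows $\KG(\Lambda)=\infty$ (Theorem \ref{wsa}) by exactly the mechanism you describe: exhibit a non-domestic string algebra quotient $\Gamma$ (Theorem \ref{D}), invoke Schr\"oer's theorem that $\KG(\Gamma)=\infty$ (Theorem \ref{00000}(\ref{Sch})), and transfer this to $\Lambda$ via monotonicity of $\KG$ under quotients (Lemma \ref{00}); the exceptional families $D(\lambda)$, $\Lambda(\lambda)$, $T(\lambda)$, $S(\lambda)$, $D(\lambda)^{(1)}$, $D(\lambda)^{(2)}$ are handled instead through Galois coverings (Theorem \ref{A}), pg-critical subcategories (Theorem \ref{00000}(\ref{KP})), and the polynomial-growth selfinjective classification (Theorem \ref{00000}(\ref{P})). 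Combined with non-domesticity of all weighted surface algebras (Theorem \ref{C}), this yields Theorem \ref{support}. One point of emphasis: your discussion of the forward implication (domestic $\Rightarrow$ $\KG$ finite) plays no role in the paper, because no weighted surface algebra is domestic; the paper's supporting evidence consists entirely of the contrapositive-free observation that for this class both ``domestic'' and ``$\KG$ finite'' fail simultaneously. Your closing diagnosis --- that the conjecture is currently only verifiable class by class, for lack of a structural normal form --- is consistent with how the paper positions its contribution.
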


All known results support the conjecture. We just mention the fundamental one due to Auslander \cite{Au} which establishes an equivalence between the finite representation type and the Krull-Gabriel dimension zero. We refer, for example, to \cite{GP2}*{Section 1} for a comprehensive list of results in this direction. 

The aim of this paper is to determine the Krull-Gabriel dimension of \emph{weighted surface algebras} \cite{ES1}, and also of
some classes of \emph{hybrid algebras} \cite{ES6} and \emph{algebras of generalized quaternion type} \cite{ES3}. In particular, we confirm the conjecture of Prest for these classes of algebras. We shall now briefly describe the context in which these algebras appeared, as well as their significance.

Assume that $A$ is an algebra. Recall that for a module $M$ in  $\mod (A)$, its \emph{syzygy} is defined as the kernel $\Omega_A(M)$ of the minimal projective cover of $M$ in $\mod (A)$. Then $M$  is said to be \emph{periodic} if $\Omega_A^n(M) \cong M$ for some $n \geq 1$, and the minimal such $n$ is called the \emph{period} of $M$. An algebra $A$ is \emph{periodic} if it is periodic as a module over the enveloping algebra $A^{e}=A_{\op}\otimes_K A$, that is $\Omega^n_{A^e}(A)\cong A$, for some $n \geq 1$. It is known that if $A$ is a periodic algebra of period $n$, then for any indecomposable non-projective module $M\in \mod (A)$ we have $\Omega^n_{A}(M)\cong M$, i.e. $M$ is periodic in $\mod (A)$. Note that any periodic algebra is \emph{selfinjective} (that is, the projective modules in $\mod (A)$ are injective). A selfinjective algebra is called \emph{symmetric} if there exists an associative non-degenerate symmetric $K$-bilinear form $(-,-): A \times A \rightarrow K$.

The problem of classification, up to the Morita equivalence, of tame periodic algebras seems to be an ambitious task. Due to a result of Dugas \cite{Du}, it is known that all selfinjective algebras of finite type are periodic. Moreover, the representation-infinite tame periodic algebras of polynomial growth were completely classified by Bia{\l}kowski, Erdmann and Skowro\'nski in \cite{BES}. In particular, there is a common bound of periods of algebras in this case. In turn, for the non-polynomial case much less is known. Some symmetric periodic algebras of non-polynomial growth appeared naturally in the study of blocks of group algebras with generalized quaternion defect groups. This gave rise to the research on \emph{algebras of quaternion type}, a class of symmetric algebras introduced by Erdmann, see \cite{E} for more details and classification. It was shown in \cites{ES0q, Ho} that every algebra of quaternion type is a tame periodic algebra of period $4$. 

The class of \emph{weighted surface algebras} was introduced by Erdmann and Skowro\'nski in \cite{ES1}, inspired by some results in cluster theory. These algebras are symmetric algebras defined in terms of surface triangulations. The class was soon after extended to the general form given in \cite{ES4}. In this way, almost all algebras of quaternion type become weighted surface algebras. It follows from \cites{ES4,ES5} that weighted surface algebras are tame of non-polynomial growth periodic algebras of period $4$, except a few cases. Further, it was shown in \cite{ES3} that they play an important role in the characterization of \emph{algebras of generalized quaternion type}, that is, symmetric representation-infinite tame algebras for which all simple modules are periodic of period $4$. Another direction of developing the theory was recently given in \cite{ES6}, where \emph{hybrid algebras} were introduced as a generalization of blocks of group algebras with semidihedral defect group. These algebras form a large class of symmetric tame or finite type algebras, which contains on one extreme Brauer graph algebras, and on the other weighted surface algebras. It was shown that hybrid algebras are precisely the blocks of idempotent algebras of weighted surface algebras, up to socle deformations \cite{ES6}, and hence they do not need to be periodic. Note that a different approach to the problem of classification of tame symmetric algebras of period $4$ was presented in \cite{EHS} where these algebras are studied in terms of combinatorial properties of their Gabriel quiver. 

The following theorem is the main result of the paper, see Theorem \ref{wsa}.
\begin{thmintro}
The Krull-Gabriel dimension of a weighted surface algebra is undefined.
\end{thmintro}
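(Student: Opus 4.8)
The plan is to prove $\KG(A)=\infty$ for a weighted surface algebra $A$ by transporting the problem to an algebra whose finitely presented functor category is combinatorially transparent, and then showing that the Krull-Gabriel filtration there never terminates. The first step is a comparison principle: for a suitable idempotent $e\in A$ one has $\KG(eAe)\le\KG(A)$, and likewise $\KG(A/I)\le\KG(A)$ for a suitable ideal $I$. Both inequalities rest on the fact that the exact functor $\mod(A)\to\mod(eAe)$, $M\mapsto\Hom_A(eA,M)\cong Me$ (respectively the full exact embedding $\mod(A/I)\hookrightarrow\mod(A)$), induces an exact functor on the categories $\CF(-)$ that is compatible with the defining Krull-Gabriel filtrations. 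Consequently it suffices to locate, inside $A$ as an idempotent algebra or as a quotient, an algebra $B$ with $\KG(B)=\infty$.

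For the second step I would exploit the special position of weighted surface algebras recalled in the introduction. By \cite{ES6}, the blocks of idempotent algebras of weighted surface algebras are, up to socle deformation, exactly the hybrid algebras, a family containing Brauer graph algebras as an extreme case; and Brauer graph algebras are special biserial. The aim is therefore to choose the idempotent $e$ (equivalently, a convenient part of the triangulation data) so that $B=eAe$ is, up to the harmless socle relations, a \emph{non-domestic} special biserial algebra. Since weighted surface algebras are tame of non-polynomial growth, and hence non-domestic, the non-domesticity of $B$ should be readable directly off the underlying Brauer graph.

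The third and central step is the computation $\KG(B)=\infty$ for this non-domestic special biserial algebra. Here I would invoke the structure theory of such algebras, whose indecomposable functors are governed by strings and bands. In the non-domestic case there are infinitely many bands not organised by the Auslander-Reiten translation into a single one-parameter family, and from them one exhibits a finitely presented functor that never becomes of finite length in any of the successive quotient categories $\CF(B)/\CF(B)_{\alpha}$; such a functor lies in no term $\CF(B)_{\alpha}$ of the filtration, so that $\CF(B)_{\alpha}\neq\CF(B)$ for every $\alpha$ and $\KG(B)=\infty$. Equivalently, one appeals to the known dichotomy that a special biserial algebra has finite Krull-Gabriel dimension precisely when it is domestic.

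I expect the main obstacle to be the transfer step rather than the band count. Weighted surface algebras are not themselves special biserial, so the reduction in the first two steps is essential and must be carried out with care: one has to check that the comparison functor is exact and reflects the Krull-Gabriel filtration, so that an infinite filtration on $\CF(B)$ genuinely pulls back to one on $\CF(A)$. The selfinjectivity of $A$ together with the precise form of its socle relations makes this delicate, and this is where the Galois-covering machinery should enter: passing to a locally bounded Galois cover of $A$ provides the concrete model on which both the comparison and the enumeration of bands can be performed, after which the conclusion is pushed down to $A$ itself.
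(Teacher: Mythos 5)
Your comparison step and your final step are sound: the quotient inequality is exactly the paper's Lemma \ref{00}(b), the idempotent inequality $\KG(eAe)\leq\KG(A)$ can be justified via Lemma \ref{0}(b) (precomposition with the exact dense functor $M\mapsto Me$ gives an exact faithful functor $\CF(eAe)\to\CF(A)$, since representables pull back to representables by tensor-hom adjunction), and Schr\"oer's theorem (Theorem \ref{00000}(1)) is the right tool once a non-domestic string algebra is in hand. The genuine gap is your second step: it is not true that every weighted surface algebra admits an idempotent truncation (or quotient) which is a non-domestic special biserial algebra, and the failures are exactly the hard cases of the theorem. Your premise that weighted surface algebras are of non-polynomial growth is false: by Theorem \ref{C}(3), the families $D(\lambda)$, $\Lambda(\lambda)$, $T(\lambda)$, $S(\lambda)$ with $\lambda\in K\setminus\{0,1\}$ are non-domestic of \emph{polynomial} growth. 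Moreover, non-domesticity does not descend to idempotent algebras: Example \ref{ex2} of the paper exhibits $e\Lambda(\lambda)e$ which is a \emph{domestic} standard selfinjective algebra with $\KG=2$. Worse, for the polynomial-growth families no admissible choice of $e$ can exist at all: idempotent algebras of tame polynomial-growth algebras are again of polynomial growth, while non-domestic symmetric special biserial (Brauer graph) algebras are of non-polynomial growth, so every Brauer-graph truncation of these $\Lambda$ is domestic and Schr\"oer's theorem yields nothing. The quotient route fares no better: the paper records that for all six exceptional families $D(\lambda)$, $\Lambda(\lambda)$, $T(\lambda)$, $S(\lambda)$, $D(\lambda)^{(1)}$, $D(\lambda)^{(2)}$ the maximal string quotient $\Gamma$ is tame \emph{domestic}; and $D(\lambda)^{(1)}$ has only two vertices, so its proper idempotent truncations are local.

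In the generic case your plan is essentially the paper's: Theorem \ref{D} provides a non-domestic string quotient, and Lemma \ref{00}(b) plus Theorem \ref{00000}(1) give $\KG(\Lambda)=\infty$ (this is Proposition \ref{hybrid}). But the exceptional families, which your proposal cannot reach, occupy the bulk of the paper's proof of Theorem \ref{wsa} and require genuinely different arguments: the non-singular $\Lambda(\lambda)$, $S(\lambda)$ are trivial extensions of tubular algebras, hence standard selfinjective non-domestic of polynomial growth, where $\KG=\infty$ follows from Theorem \ref{00000}(3); the singular $\Lambda(1)$, $S(1)$ are trivial extensions of pg-critical algebras, handled via $\KG(B)\leq\KG(\widehat{B})\leq\KG(\T(B))$ (Lemma \ref{00} and Corollary \ref{000}) together with the pg-critical result Theorem \ref{00000}(2); the disc and triangle algebras are orbit algebras of the previous ones by finite cyclic groups, handled by the Galois covering Theorem \ref{A}; and $D(\lambda)^{(1)}$, $D(\lambda)^{(2)}$ require passing to a quotient admitting a Galois cover that contains a pg-critical convex subcategory. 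Your closing appeal to ``Galois-covering machinery'' gestures in this direction, but without these case analyses the proof is incomplete.
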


Moreover, we show analogous facts about Krull-Gabriel dimension for some classes of hybrid algebras and algebras of generalized quaternion type, see Proposition \ref{hybrid} and Corollary \ref{quaternion}, respectively. We conclude in Theorem \ref{support} that the class of weighted surface algebras supports the conjecture of Prest. In Corollary \ref{soc} we obtain some natural generalisation of Theorem \ref{wsa}.

Proofs of our results are obtained, inter alia, by applying recent results of Pastuszak, relating Krull-Gabriel dimension and covering techniques. Namely, it was shown in \cites{GP0,GP1}  that if $R$ is a \emph{locally support-finite} \cite{DoSk} and \emph{intervally finite} \cite{BoGa} locally bounded $K$-category and $G$ is a torsion-free admissible group of $K$-linear automorphisms of $R$, then $\KG(R)=\KG(R\slash G)$ where $R\slash G$ denotes the orbit category of $R$ \cite{BoGa}. In other words, the induced Galois covering $R\ra R\slash G$ preserves the Krull-Gabriel dimension.
This theorem appeared to be very useful. Indeed, it was applied  in \cite{GP0}*{Theorem 7.3, Theorem 8.1} to determine Krull-Gabriel dimensions of tame locally support-finite repetitive categories and standard selfinjective algebras of polynomial growth. Moreover, in \cite{J-PP} Jaworska-Pastuszak and Pastuszak applied the theorem to relate and describe Krull-Gabriel dimensions of repetitive categories, cluster repetitive categories and
cluster-tilted algebras. In particular, it was shown that Prest conjecture is valid also for cluster-tilted algebras. A new, more general result shows that Galois coverings do not increase Krull-Gabriel dimension, that is, for a Galois covering $R \rightarrow A$ we always have $\KG(R)\leq \KG(A)$ \cite{GP2}, see also \cite{P7}. This is particularly useful in determining Krull-Gabriel dimension of $A$, if one is able to show that $\KG(R)=\infty$, since then $\KG(A)=\infty$. Our main results are based on this pattern of thinking.

The paper is organized as follows. Section 2 is devoted to present some facts about Galois coverings, Krull-Gabriel dimension and the way these concepts are related. This is based mostly on the results of \cite{GP0,GP2}, see also \cite{P7}. In Section 3 we give an insight into the nature of algebras which we are interested in, especially of weighted surface algebras (in the general version) and hybrid algebras. To this end, we review from \cite{ES5}, \cite{ES6} as much as needed. We believe that this section provides a convenient introduction to the theory. In Section 4 we prove our main results. We start with Proposition \ref{hybrid} where we determine the Krull-Gabriel dimension of some particular class of hybrid algebras. Afterwards we give examples showing the necessity of assumptions in this theorem, see Examples \ref{ex1} and \ref{ex2}. Then we apply \ref{hybrid} in the proof of the main Theorem \ref{wsa} which states that weighted surface algebras have Krull-Gabriel dimension undefined. In Theorem \ref{support} we show that the class of weighted surface algebras supports the conjecture of Prest. Finally, we draw some conclusions. Firstly, we present in Corollary \ref{soc} some natural generalisation of Theorem \ref{wsa}. Then in Corollary \ref{quaternion} we determine the Krull-Gabriel dimension of some class of algebras of generalized quaternion type. In Remarks \ref{rem_fin1} and \ref{rem_fin2} we make some comments concerning \emph{higher tetrahedral algebras} \cite{ES2} and \emph{higher spherical algebras} \cite{ES7}. 

\section{Galois coverings and Krull-Gabriel dimension}

We start this section with some general results on Krull-Gabriel dimension. The following two facts will be useful in our considerations. For the proofs we refer the reader to \cite{GP2}*{Lemma 2.4 and Lemma 2.6}, respectively, see also \cite{Kr}*{Appendix B}.

\begin{lm} \label{0}
Assume that $\mathcal{C}$, $\mathcal{D}$ are abelian categories and $F: \mathcal{C} \rightarrow \mathcal{D}$ is an exact functor.
\begin{itemize}
  \item[$(a)$] If $F$ is full and dense, then $\KG(\mathcal{D})\leq \KG(\mathcal{C})$.
  \item[$(b)$] If $F$ is faithful, then $\KG(\mathcal{C})\leq \KG(\mathcal{D})$.
\end{itemize}
\end{lm}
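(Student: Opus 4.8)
The plan is to compare the two Krull-Gabriel filtrations $(\mathcal{C}_\alpha)_\alpha$ and $(\mathcal{D}_\alpha)_\alpha$ object by object, by transfinite induction on $\alpha$. For part $(a)$ I would prove that $F(\mathcal{C}_\alpha)\subseteq\mathcal{D}_\alpha$ for every $\alpha$; once this is known, density of $F$ forces $\mathcal{C}_\alpha=\mathcal{C}$ to imply $\mathcal{D}_\alpha=\mathcal{D}$, and hence $\KG(\mathcal{D})\leq\KG(\mathcal{C})$. For part $(b)$ I would instead work with the preimages $\mathcal{S}_\alpha:=\{C\in\mathcal{C}:F(C)\in\mathcal{D}_\alpha\}$ and prove $\mathcal{S}_\alpha\subseteq\mathcal{C}_\alpha$ for every $\alpha$; evaluating at $\alpha=\KG(\mathcal{D})$ then gives $\mathcal{C}=F^{-1}(\mathcal{D})=\mathcal{S}_\alpha\subseteq\mathcal{C}_\alpha$, whence $\KG(\mathcal{C})\leq\KG(\mathcal{D})$. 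In both inductions the limit step is the trivial passage to unions, and the base case is immediate---trivially for $(a)$, and for $(b)$ because a faithful functor sends only the zero object to zero---so all the content sits in the successor step.

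For the successor step of $(a)$ the inductive hypothesis $F(\mathcal{C}_\alpha)\subseteq\mathcal{D}_\alpha$ is exactly what is needed to factor the quotient functor $q_\mathcal{D}\circ F$ through $q_\mathcal{C}$ and obtain an induced functor $\overline{F}\colon \mathcal{C}/\mathcal{C}_\alpha\to\mathcal{D}/\mathcal{D}_\alpha$ with $\overline{F}\,q_\mathcal{C}=q_\mathcal{D}\,F$. I would check that $\overline{F}$ is again exact, full and dense, and then invoke the elementary fact that a full, dense, exact functor sends a simple object to a simple or zero object: given a nonzero subobject $T\hookrightarrow \overline{F}(S)$ of the image of a simple $S$, density writes $T$ as $\overline{F}(U)$, fullness lifts the inclusion to some $u\colon U\to S$, and exactness together with simplicity of $S$ forces $u$ to be epi, so $T=\overline{F}(S)$. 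Applying this along a composition series shows $\overline{F}$ preserves finite length; since $C\in\mathcal{C}_{\alpha+1}$ means $q_\mathcal{C}(C)$ has finite length and $q_\mathcal{D}(F(C))=\overline{F}(q_\mathcal{C}(C))$, we conclude $F(C)\in\mathcal{D}_{\alpha+1}$.

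For the successor step of $(b)$ I would first note that $\mathcal{S}_\alpha=F^{-1}(\mathcal{D}_\alpha)$ is a Serre subcategory of $\mathcal{C}$, since $F$ is exact, and that $F(\mathcal{S}_\alpha)\subseteq\mathcal{D}_\alpha$ holds by construction, so $F$ induces an exact functor $\widetilde{F}\colon\mathcal{C}/\mathcal{S}_\alpha\to\mathcal{D}/\mathcal{D}_\alpha$. Because $\mathcal{S}_\alpha$ is the \emph{full} preimage of $\mathcal{D}_\alpha$ and $F$ is faithful, $\widetilde{F}$ is faithful; and a faithful exact functor reflects finite length (an infinite proper filtration of an object would map, by exactness, to an infinite proper filtration of its image, each factor remaining nonzero by faithfulness). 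Thus $F(C)\in\mathcal{D}_{\alpha+1}$ forces $q_{\mathcal{S}_\alpha}(C)$ to have finite length in $\mathcal{C}/\mathcal{S}_\alpha$. Finally, the inductive hypothesis $\mathcal{S}_\alpha\subseteq\mathcal{C}_\alpha$ yields a quotient functor $\mathcal{C}/\mathcal{S}_\alpha\to\mathcal{C}/\mathcal{C}_\alpha$, which is full, dense and exact and hence preserves finite length by the sub-lemma of part $(a)$; pushing $q_{\mathcal{S}_\alpha}(C)$ forward shows $q_\mathcal{C}(C)$ has finite length, i.e. $C\in\mathcal{C}_{\alpha+1}$.

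The main obstacle I anticipate is not the bookkeeping of the induction but the verification that the induced functors $\overline{F}$ and $\widetilde{F}$ inherit fullness, density and faithfulness from $F$. This is where one must descend to the calculus of fractions describing morphisms in a Serre quotient $\mathcal{A}/\mathcal{S}$ as a filtered colimit $\varinjlim\Hom_\mathcal{A}(X',Y'')$ over subobjects $X'\subseteq X$ and quotients $Y\twoheadrightarrow Y''$ with kernel and cokernel in $\mathcal{S}$; lifting such a fraction from the $\mathcal{D}$-side back to $\mathcal{C}$ uses density to realise the relevant subobjects and quotients and fullness to realise the map. I would isolate this once as the key technical point, after which both statements follow from the two sub-lemmas recording the behaviour of simple objects and of finite length under full-dense-exact, respectively faithful-exact, functors.
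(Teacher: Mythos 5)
First, a point of comparison: the paper does not prove this lemma at all; it refers to \cite{GP2}*{Lemma 2.4 and Lemma 2.6} and \cite{Kr}*{Appendix B}. Your induction skeleton --- transfinite induction along the two Krull--Gabriel filtrations, proving $F(\mathcal{C}_\alpha)\subseteq\mathcal{D}_\alpha$ for $(a)$ and $F^{-1}(\mathcal{D}_\alpha)\subseteq\mathcal{C}_\alpha$ for $(b)$ --- is exactly the standard one, and most of your steps are sound: the base and limit cases, the factorization of $q_{\mathcal{D}}F$ through $q_{\mathcal{C}}$, the sub-lemma that a full, dense, exact functor sends simple objects to simple or zero objects, the faithfulness of $\widetilde{F}$ (which, as you say, uses that $\mathcal{S}_\alpha$ is the \emph{full} preimage), and the fact that a faithful exact functor reflects finite length.

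The gap is the claim, used in both parts, that fullness is inherited by the induced functors between Serre quotients: by $\overline{F}\colon\mathcal{C}/\mathcal{C}_\alpha\to\mathcal{D}/\mathcal{D}_\alpha$ in $(a)$, and by the canonical functor $\mathcal{C}/\mathcal{S}_\alpha\to\mathcal{C}/\mathcal{C}_\alpha$ in $(b)$. As a general principle this is false: quotient functors need not be full. For instance, let $\mathcal{C}=\rep(1\to 2)$ and $\mathcal{S}=\{V:V_2=0\}$; then $\Hom_{\mathcal{C}}(P,S_2)=0$ for $P=(K\xra{\id}K)$ and $S_2=(0\to K)$, yet $q(S_2)\hookrightarrow q(P)$ is an isomorphism in $\mathcal{C}/\mathcal{S}$, so $\Hom_{\mathcal{C}/\mathcal{S}}(q(P),q(S_2))\neq 0$ and $q$ is not full. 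In particular the functor in $(b)$, being a quotient functor, cannot simply be asserted to be full. Your proposed repair for $(a)$ --- lifting a fraction $U\to F(Y)/V$ with $U\subseteq F(X)$, $F(X)/U\in\mathcal{D}_\alpha$ --- breaks at a reflection step: density, fullness and exactness do produce $X'\subseteq X$ with $F(X')=U$, but this only gives $F(X/X')\in\mathcal{D}_\alpha$, and without faithfulness one cannot conclude $X/X'\in\mathcal{C}_\alpha$, which is what a morphism in $\mathcal{C}/\mathcal{C}_\alpha$ requires (for the projection $\mathcal{C}=\mathcal{D}\times\mathcal{E}\to\mathcal{D}$ the cokernel $X/X'$ can be an arbitrary object of $0\times\mathcal{E}$, lying in no $\mathcal{C}_\alpha$). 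Fortunately, fullness of the induced functors is never needed: what your argument actually uses is that they preserve finite length, and this follows from the standard fact that every subobject of $q(X)$ in a Serre quotient is of the form $q(X')$ for some subobject $X'\subseteq X$; hence any quotient functor sends simples to simple or zero objects and, being exact, preserves finite length. This settles $(b)$ at once, and for $(a)$ you should run your simples argument on $\overline{F}$ directly rather than via fullness of $\overline{F}$: a nonzero subobject of $q_{\mathcal{D}}(F(X))$ equals $q_{\mathcal{D}}(U)$ with $U=F(X')$, $X'\subseteq X$; if $q_{\mathcal{C}}(X')=0$ then $X'\in\mathcal{C}_\alpha$, so $q_{\mathcal{D}}(U)=0$ by the inductive hypothesis, and otherwise $X/X'\in\mathcal{C}_\alpha$, so $F(X/X')\in\mathcal{D}_\alpha$ and $q_{\mathcal{D}}(U)=q_{\mathcal{D}}(F(X))$. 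With these replacements your induction goes through verbatim.
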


\begin{lm} \label{00}
Let $C$ and $D$ be locally bounded $K$-categories. 
\begin{itemize}
\item[$(a)$] If $D$ is a finite convex subcategory of $C$, then $\KG(D)\leq \KG(C)$.
\item[$(b)$] If $D$ is a quotient category of $C$, then $\KG(D)\leq \KG(C)$.
\end{itemize}
\end{lm}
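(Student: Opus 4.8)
The plan is to derive both inequalities from Lemma \ref{0}$(b)$, applied to suitable exact functors between the functor categories. By definition $\KG(D)$ and $\KG(C)$ are the Krull--Gabriel dimensions of the abelian categories $\CF(D)$ and $\CF(C)$, so in each case it suffices to construct an exact \emph{faithful} functor $\CF(D)\to\CF(C)$. The common mechanism is precomposition: any $K$-linear functor $Q\colon\mod(C)\to\mod(D)$ induces $(-)\circ Q$ on functor categories, and since a sequence in $\CF(R)$ is exact exactly when it is exact after evaluation at every object of $\mod(R)$, precomposition automatically preserves exactness (one merely restricts the test objects to those of the form $Q(M)$). Thus I only have to choose $Q$ so that $(-)\circ Q$ lands in the finitely presented functors and is faithful.

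I will take $Q$ to be the left adjoint of a fully faithful functor $S\colon\mod(D)\to\mod(C)$ preserving finite-dimensionality, writing $Q\dashv S$. Faithfulness is then automatic: since $S$ is fully faithful the counit $Q\circ S\to\mathrm{id}_{\mod(D)}$ is invertible, so every $N$ in $\mod(D)$ is isomorphic to $Q(S(N))$, whence a natural transformation $\xi$ with $\xi\circ Q=0$ already vanishes. Preservation of finite presentation follows from the adjunction isomorphism $\Hom_D(Q(-),N)\cong\Hom_C(-,S(N))$: it identifies the precomposition $\Hom_D(-,N)\circ Q$ of a representable functor with the representable functor $\Hom_C(-,S(N))$, and since $S(N)$ lies in $\mod(C)$ this turns a finite projective presentation of a functor in $\CF(D)$ into one in $\CF(C)$. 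Hence $(-)\circ Q\colon\CF(D)\to\CF(C)$ is a well-defined exact faithful functor, and Lemma \ref{0}$(b)$ yields $\KG(D)\leq\KG(C)$.

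It remains to produce the adjoint pair $(Q,S)$ in each case. For $(b)$, if $\pi\colon C\to D=C/I$ is the canonical quotient functor, I take $S=\pi^{\bullet}$ to be inflation along $\pi$ (a $D$-module viewed as a $C$-module annihilated by $I$) and $Q=\pi_{!}$ its left adjoint $M\mapsto M/MI$; here $S$ is fully faithful because a $C$-homomorphism between modules killed by $I$ is automatically $D$-linear, and both functors evidently preserve finite-dimensionality. For $(a)$, I take $S=e\colon\mod(D)\to\mod(C)$ to be extension by zero, sending a $D$-module to the $C$-module that agrees with it on the objects of $D$ and vanishes elsewhere. Convexity of $D$ is precisely what makes this a $C$-module (no morphism between objects of $D$ factors through an object outside $D$) and renders $e$ exact and fully faithful, while finiteness of $D$ guarantees that $e(N)$ remains finite-dimensional, i.e.\ lies in $\mod(C)$. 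Its left adjoint $Q=e^{*}$ is realised as an explicit quotient of the restriction functor $\mod(C)\to\mod(D)$, so $Q$ again takes values in $\mod(D)$.

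The step demanding genuine care — and the only one using the hypotheses on $D$ — is verifying that $(-)\circ Q$ does not leave the subcategory of finitely presented functors. This is exactly why $S$ must carry finite-dimensional modules to finite-dimensional modules, and it is here that finiteness of $D$ enters in $(a)$ (ensuring $e(N)\in\mod(C)$) together with convexity (ensuring $e$, hence $e^{*}$, exists and is exact) and the local boundedness of $C$. Once this is secured, the faithfulness and exactness of $(-)\circ Q$ are formal, and both inequalities follow from Lemma \ref{0}$(b)$.
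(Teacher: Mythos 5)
Your argument cannot be compared line by line with one in the paper, because the paper proves nothing here: Lemma \ref{00} is simply quoted from the literature (it refers to \cite{GP2}, Lemma 2.6, see also \cite{Kr}, Appendix B). Judged on its own merits, your proof is correct, and it is exactly the kind of functor-category argument the cited sources use; moreover it only relies on Lemma \ref{0}(b), which the paper does record. The mechanism is sound: precomposition with any functor $Q\colon \mod(C)\to\mod(D)$ is exact since exactness in $\CF(R)$ is computed objectwise; if $Q$ has a fully faithful right adjoint $S$ preserving finite dimensionality, then $\Hom_D(Q(-),N)\cong \Hom_C(-,S(N))$ shows that representables pull back to representables, so $(-)\circ Q$ maps $\CF(D)$ into $\CF(C)$, and invertibility of the counit $QS\to\id$ makes $Q$ dense, hence $(-)\circ Q$ faithful; Lemma \ref{0}(b) then yields $\KG(D)\leq\KG(C)$. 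Both instantiations are also correct: for (b), $M\mapsto M/MI$ is left adjoint to inflation along $\pi\colon C\to D=C/I$; for (a), extension by zero is a well-defined, exact, fully faithful embedding precisely because of convexity, and its left adjoint is $M\mapsto$ the largest quotient of $M$ supported on $\ob(D)$. A genuine merit of your route is worth noting: the more obvious dual attempt, namely restricting functors along $\mod(D)\subseteq\mod(C)$ and invoking Lemma \ref{0}(a), fails, because that restriction, though exact and dense, is not full in general (already for $C=K[x]/(x^2)$ and $D=C/(x)$); passing to left adjoints and faithfulness avoids this.

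Two slips, neither fatal. First, in (a) the parenthetical justification should say that any composite of two morphisms of $C$ passing through an object outside $D$ \emph{vanishes}: by convexity one of the two factors must be zero; this vanishing is what makes extension by zero functorial. Second, finiteness of $D$ is not what keeps $e(N)$ inside $\mod(C)$: since $D$ is locally bounded, every $N\in\mod(D)$ is already finite dimensional, so $e(N)$ is finite dimensional whatever the size of $D$. In fact your argument never uses finiteness of $D$ and thus proves (a) for arbitrary convex subcategories; this is harmless for the lemma as stated, but the sentence locating ``where finiteness enters'' is inaccurate.
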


Since a finite locally bounded $K$-category is identified with a bound quiver $K$-algebra, the inequalities in Lemma \ref{00} are also valid for subalgebras and quotient algebras of finite dimensional $K$-algebras.

Assume that $R,A$ are locally bounded $K$-categories, $F:R\ra A$ is a $K$-linear functor and $G$ a group of $K$-linear automorphisms of $R$ acting freely on the objects of $R$ (i.e. $gx=x$ if and only if $g=1$, for any $g\in G$ and $x\in\ob(R)$). Following \cite{BoGa}, we call $F:R\ra A$ a \emph{Galois covering} if and only if 
\begin{enumerate}[\rm(i)]
	\item the functor $F:R\ra A$ induces isomorphisms $$\bigoplus_{g\in G}R(gx,y)\cong A(F(x),F(y))\cong\bigoplus_{g\in G}R(x,gy)$$ of vector spaces, for any $x,y\in\ob(R)$,
	\item the functor $F:R\ra A$ induces a surjective function $\ob(R)\ra\ob(A)$,
	\item $Fg=F$, for any $g\in G$,
	\item for any $x,y\in\ob(R)$ such that $F(x)=F(y)$ there is $g\in G$ such that $gx=y$. 
\end{enumerate}
It is well known that $F:R\ra A$ satisfies the above conditions if and only if $F$ induces an isomorphism $A\cong R\slash G$ where $R\slash G$ is the \emph{orbit category} of $R$, see \cite{BoGa}.

The following theorem, stating that a Galois covering does not increase Krull-Gabriel dimension, was proved in \cite{GP2}*{Theorem 3.8}. This result is crucial in the proofs from Section 4.
\begin{thm} \label{A}
Assume that $F:R\ra A$ is a Galois covering of locally bounded $K$-categories $R$ and $A$. Then $\KG(R)\leq\KG(A)$ and in particular:
\begin{enumerate}[\rm(1)]
  \item If $\KG(R)$ is undefined, then $\KG(A)$ is undefined.
  \item If $\KG(A)$ is finite, then $\KG(R)$ is finite.
\end{enumerate}
\end{thm}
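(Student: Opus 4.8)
The plan is to realize the inequality $\KG(R) \le \KG(A)$ by constructing a suitable exact functor between the associated functor categories and invoking Lemma \ref{0}. Since by definition $\KG(R) = \KG(\CF(R))$ and $\KG(A) = \KG(\CF(A))$, it suffices to produce an exact faithful functor $\Phi : \CF(R) \to \CF(A)$; then Lemma \ref{0}$(b)$ gives $\KG(\CF(R)) \le \KG(\CF(A))$, which is exactly the assertion.

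To build $\Phi$, I would first bring in the standard functors attached to the Galois covering $F : R \to A$: the pull-up (restriction) functor $F_\bullet : \Mod(A) \to \Mod(R)$ and its left adjoint, the push-down functor $F_\lambda : \Mod(R) \to \Mod(A)$. Both are exact, and both restrict to functors between the categories of finitely generated modules; for $F_\bullet$ this is immediate, since $F_\bullet(X)$ has the same finite $K$-dimension as $X$, and a finite-dimensional module over a locally bounded $K$-category is finitely generated. I then define $\Phi$ by precomposition, $\Phi(M) = M \circ F_\bullet$, sending a finitely presented contravariant functor $M : \mod(R) \to \mod(K)$ to the functor $X \mapsto M(F_\bullet X)$ on $\mod(A)$.

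Three things have to be checked. Exactness of $\Phi$ is the easiest: a sequence in $\CF(R)$ is exact precisely when it is exact after evaluation at every object of $\mod(R)$, and applying this at the objects $F_\bullet X$ for $X \in \mod(A)$ shows that $\Phi$ preserves exact sequences. Faithfulness I would derive from the fundamental covering identity $F_\bullet F_\lambda M \cong \bigoplus_{g \in G} {}^{g}M$, which exhibits every $M \in \mod(R)$ as a direct summand of $F_\bullet F_\lambda M$ (the summand indexed by $g = 1$). Indeed, if $\eta : M \to M'$ is a nonzero morphism in $\CF(R)$, choose $Y \in \mod(R)$ with $\eta_Y \neq 0$ and evaluate $\Phi(\eta)$ at $F_\lambda Y$: additivity of the functors and naturality of $\eta$ identify $\eta_Y$ with a diagonal block of $\eta_{F_\bullet F_\lambda Y} = \Phi(\eta)_{F_\lambda Y}$, so $\Phi(\eta) \neq 0$.

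The main obstacle is well-definedness, namely that $\Phi(M)$ is again finitely presented. Starting from a presentation $\Hom_R(-, N_1) \to \Hom_R(-, N_0) \to M \to 0$ and precomposing with the exact functor $F_\bullet$, one reduces to showing that each functor $X \mapsto \Hom_R(F_\bullet X, N)$ lies in $\CF(A)$. Here I would use the covering machinery to equip $F_\bullet$ with a right adjoint $F_\rho$ sending $\mod(R)$ into $\mod(A)$ — for a Galois covering $F_\rho$ agrees, up to a twist by $G$, with the push-down $F_\lambda$ — so that $\Hom_R(F_\bullet X, N) \cong \Hom_A(X, F_\rho N)$ is representable, hence finitely presented. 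Verifying this adjunction together with the finiteness of $F_\rho N$ is the technical heart of the argument; once it is in place, Lemma \ref{0}$(b)$ yields $\KG(R) \le \KG(A)$, and statements $(1)$ and $(2)$ follow at once, since $\infty$ is the largest value: $\KG(R) \le \KG(A)$ forces $\KG(A) = \infty$ when $\KG(R) = \infty$, and forces $\KG(R)$ finite when $\KG(A)$ is finite.
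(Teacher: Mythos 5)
Your overall strategy --- produce an exact faithful functor $\CF(R)\to\CF(A)$ and invoke Lemma \ref{0}(b) --- is the right one, but your construction of $\Phi$ fails at the very first step. The claim that ``$F_\bullet(X)$ has the same finite $K$-dimension as $X$'' is false: it is the push-down $F_\lambda$ that preserves total dimension, whereas the pull-up satisfies $\dim_K F_\bullet X=|G|\cdot\dim_K X$, since $(F_\bullet X)(r)=X(F(r))$ and every object of $A$ has exactly $|G|$ preimages in $R$. Hence, whenever the covering group $G$ is infinite --- which is the principal case in this paper, e.g.\ the covering $\widehat{B}\to\T(B)$ with $G=(\nu_{\widehat B})\cong\ZZ$ behind Corollary \ref{000} and the coverings used in the proof of Theorem \ref{wsa} --- the module $F_\bullet X$ is infinite-dimensional, so it is not an object of $\mod(R)$ and the expression $M(F_\bullet X)$ is literally undefined for $M\in\CF(R)$. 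Your own key identity $F_\bullet F_\lambda M\cong\bigoplus_{g\in G}{}^{g}M$ already exhibits the problem: for $M\neq 0$ and $G$ infinite its right-hand side is infinite-dimensional. Everything downstream inherits the gap: the exactness argument (``evaluate at the objects $F_\bullet X$'') is vacuous, because exactness in $\CF(R)$ only controls evaluation at objects of $\mod(R)$; and even if you extend each $M$ to $\Mod(R)$ via its presentation, evaluation at an infinite-dimensional module involves $\Hom$ out of a filtered union of finite-dimensional submodules, i.e.\ an inverse limit, which is only left exact, so exactness of $\Phi$ would still not follow.

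The salvageable germ of your proposal is the adjunction you invoke at the end, $\Hom_R(F_\bullet X,N)\cong\Hom_A(X,F_\rho N)$ together with $F_\rho N\cong F_\lambda N$ for finite-dimensional $N$. The workable construction runs through the push-down rather than the pull-up: define $\Phi$ on representables by $\Hom_R(-,N)\mapsto\Hom_A(-,F_\lambda N)$ and extend right-exactly to $\CF(R)$ using presentations. Exactness of $\Phi$ then follows from exactness of $F_\lambda$, e.g.\ via the canonical resolutions $0\to\Hom_R(-,\Ker a)\to\Hom_R(-,N_1)\to\Hom_R(-,N_0)\to M\to 0$, which are preserved because $F_\lambda(\Ker a)=\Ker(F_\lambda a)$; faithfulness follows because $F_\lambda$ reflects split epimorphisms, by the covering isomorphism $\Hom_A(F_\lambda X,F_\lambda Y)\cong\bigoplus_{g\in G}\Hom_R({}^{g}X,Y)$, so that $\Phi$ sends nonzero objects to nonzero objects. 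Note finally that the paper itself gives no proof of Theorem \ref{A}: it cites \cite{GP2}*{Theorem 3.8}. So there is no in-paper argument to match your proposal against; but as written, your proposal does not constitute a proof.
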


A standard example of a Galois covering functor is the Galois covering $F: \widehat{B} \rightarrow \T(B)$ of the trivial extension $\T(B)$ of an algebra $B$ by the repetitive category $\widehat{B}$ of $B$.
By general theory (see \cite{Sk2}), the trivial extension algebra $\T(B)$ is isomorphic to the orbit algebra $\widehat{B}/(\nu_{\widehat B})$ of the category $\widehat{B}$ with respect to the infinite cyclic group $(\nu_{\widehat B})$ of automorphisms of $\widehat{B}$ generated by the \emph{Nakayama automorphism} $\nu_{\widehat B}$. Therefore we have the immediate corollary of Theorem \ref{A}.
\begin{cor} \label{000}
For any finite dimensional $K$-algebra $B$ we have $\KG(\widehat{B}) \leq \KG(\T(B))$.
\end{cor}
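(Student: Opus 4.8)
The plan is to deduce Corollary~\ref{000} directly from Theorem~\ref{A} together with the structural fact, recalled just above, that $\T(B)$ is the orbit algebra of $\widehat{B}$ under the infinite cyclic group generated by the Nakayama automorphism $\nu_{\widehat B}$. So the essential content of the corollary is that this orbit construction is an instance of a Galois covering in the sense specified in the excerpt, after which the conclusion $\KG(\widehat B)\le\KG(\T(B))$ is immediate from the inequality in Theorem~\ref{A}.

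First I would set $R=\widehat B$, $A=\T(B)$, and let $G=(\nu_{\widehat B})$ be the infinite cyclic group generated by the Nakayama automorphism, acting on $\widehat B$ by $K$-linear automorphisms. I would take $F:\widehat B\ra\T(B)$ to be the canonical projection onto the orbit algebra $\widehat B/(\nu_{\widehat B})$, which is identified with $\T(B)$ by the general theory cited (\cite{Sk2}). Then I would check that $F$ is a Galois covering by verifying conditions (i)--(iv) in the definition. The key point is that the $\ZZ$-action by $\nu_{\widehat B}$ is free on objects: the repetitive category $\widehat B$ has objects indexed by copies of the objects of $B$ over $\ZZ$, and the Nakayama automorphism shifts this indexing by one, so no nonidentity power of $\nu_{\widehat B}$ can fix an object. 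Conditions (ii)--(iv) follow from the description of the orbit category, and condition (i), the hom-space isomorphism, is exactly the standard statement that passing to the orbit category with respect to a freely acting admissible group of automorphisms yields these identifications of morphism spaces; this is precisely the content of the equivalence, recalled in the excerpt, between the Galois covering conditions and the isomorphism $A\cong R/G$.

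Once $F:\widehat B\ra\T(B)$ is recognised as a Galois covering, Theorem~\ref{A} applies verbatim and gives $\KG(\widehat B)\le\KG(\T(B))$, which is the assertion of the corollary.

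The main obstacle, and really the only non-formal point, is the verification that $(\nu_{\widehat B})$ acts freely and admissibly and that the orbit construction genuinely matches the definition of Galois covering given above (in particular that $\nu_{\widehat B}$ has infinite order with free action on objects, so that $G\cong\ZZ$ acts without fixed points). This is standard and is exactly what is packaged in the cited result that $\T(B)\cong\widehat B/(\nu_{\widehat B})$; since the excerpt explicitly invokes this general theory, I would cite \cite{Sk2} for it rather than reprove it, and then the corollary is simply the specialisation of Theorem~\ref{A} to this covering.
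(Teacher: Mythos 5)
Your proposal is correct and follows exactly the paper's own route: identify $\T(B)$ with the orbit algebra $\widehat{B}/(\nu_{\widehat B})$ via the general theory of \cite{Sk2}, recognise the projection $\widehat{B}\ra\T(B)$ as a Galois covering, and apply Theorem \ref{A}. The paper simply states this as immediate, whereas you spell out the verification of the covering conditions (in particular the free action of $(\nu_{\widehat B})$ on objects), which is a harmless elaboration of the same argument.
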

Recall that the group $G$ of automorphisms of a locally bounded $K$-category $R$ is called \emph{admissible} if and only if $G$ acts freely on the objects of $R$ and there are only finitely many $G$-orbits. A category $R$ is \emph{locally support-finite} \cite{DoSk} if and only if for any object $x$ of $R$ the union of supports of all indecomposable modules $M$ such that $M(x)\neq 0$, is finite. A category $R$ is \emph{intervally finite} \cite{BoGa}, if any finite subcategory of $R$ has a finite convex hull. We have the following fact proved in \cite{GP0}*{Theorem 7.3}.

\begin{thm}\label{B}
Assume that $R$ is a locally support-finite and intervally finite locally bounded $K$-category and $G$ is an admissible torsion-free group of $K$-linear automorphisms of $R$. Assume that $A=R\slash G$ is the orbit category and $F: R \ra A$ the associated Galois covering. Then $\KG(R)=\KG(A)$.
\end{thm}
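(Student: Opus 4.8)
\section*{Proof proposal for Theorem \ref{B}}

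The plan is to isolate the reverse inequality. Theorem \ref{A} already supplies $\KG(R)\le\KG(A)$ for the Galois covering $F\colon R\to A$ with no extra hypotheses, so the entire content of the statement is the opposite inequality $\KG(A)\le\KG(R)$, and this is where the conditions on $R$ and $G$ must be spent. The strategy is to produce an exact functor between the functor categories $\CF(A)$ and $\CF(R)$ to which Lemma \ref{0} applies. Concretely I aim for a faithful exact functor $\Phi\colon\CF(A)\to\CF(R)$, so that Lemma \ref{0}$(b)$ gives $\KG(A)=\KG(\CF(A))\le\KG(\CF(R))=\KG(R)$; dually, a full dense exact functor $\CF(R)\to\CF(A)$ would do the same job through Lemma \ref{0}$(a)$.

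The functor linking the module categories is the push-down $F_\lambda\colon\mod R\to\mod A$ attached to the covering. I would use its standard properties: it is exact and commutes with direct sums, and, since $R$ is locally support-finite, a theorem of Dowbor--Skowro\'nski \cite{DoSk} guarantees that $F_\lambda$ is \emph{dense}, i.e.\ every indecomposable, hence every, finite dimensional $A$-module is isomorphic to $F_\lambda(M)$ for some $M\in\mod R$. Torsion-freeness of $G$ ensures in addition that $F_\lambda$ preserves indecomposability and that $G$ acts freely on the relevant isomorphism classes, so that the fibres of the induced assignments are governed by $G$ alone; admissibility guarantees that $A$ is a \emph{finite} category and that the bookkeeping over $G$-orbits terminates. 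These are exactly the features one needs to convert density of modules into faithfulness of the induced functor.

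The heart of the argument, and the step I expect to be the main obstacle, is transporting $F_\lambda$ to a well-behaved functor between the \emph{finitely presented} functor categories. The naive candidate $\Phi\colon\CF(A)\to\CF(R)$, $H\mapsto H\circ F_\lambda$, is automatically exact, because exactness in these functor categories is computed objectwise and $F_\lambda$ is exact; and it is faithful as soon as $F_\lambda$ is dense, since a natural transformation vanishing on every $F_\lambda(M)$ then vanishes on every $A$-module. The trouble is well-definedness. For a representable $H=\Hom_A(-,X)$ adjunction gives $\Phi(H)\cong\Hom_R(-,F^{\bullet}X)$, and the restriction $F^{\bullet}X$ of a finite dimensional $A$-module is spread over an entire (infinite) $G$-orbit of objects of $R$, hence is infinite dimensional and does not lie in $\mod R$; so $\Phi(H)$ need not be finitely presented. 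Removing this obstruction is where local support-finiteness and interval-finiteness must enter essentially: one has to show that, after accounting for the $G$-action, these ``locally representable'' functors admit finite presentations by genuine modules of $\mod R$, equivalently that the relevant supports can be confined to a finite convex subcategory so that Lemma \ref{00} becomes applicable, and that the construction respects the Krull--Gabriel filtrations.

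Once $\Phi$ is established as a well-defined exact faithful functor $\CF(A)\to\CF(R)$, Lemma \ref{0}$(b)$ yields $\KG(A)\le\KG(R)$, and combining this with $\KG(R)\le\KG(A)$ from Theorem \ref{A} gives the desired equality $\KG(R)=\KG(A)$. I would expect interval-finiteness and local support-finiteness to carry the weight of the finite-presentation step, torsion-freeness to rule out fixed points that would otherwise break faithfulness and the transfer of indecomposability, and admissibility to keep both $A$ and the orbit bookkeeping finite.
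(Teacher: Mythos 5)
Your reduction of the problem is sound, but note first what you are being compared against: the paper does not prove Theorem \ref{B} at all --- it imports it from \cite{GP0}*{Theorem 7.3} (together with the corrigendum \cite{GP1}). Within the logical context of this paper your first step is fine: Theorem \ref{A} supplies $\KG(R)\leq\KG(A)$, so everything rests on $\KG(A)\leq\KG(R)$, and Lemma \ref{0} identifies the two mechanisms that could deliver it. The appeal to the Dowbor--Skowro\'nski theorem \cite{DoSk} for density of the push-down $F_\lambda$ is also a genuine ingredient of the cited proof.

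The gap is in the step you yourself flag as "the main obstacle", and it is worse than you think: it is not an obstacle that local support-finiteness and interval finiteness can remove, but a dead end. The precomposition functor $\Phi(H)=H\circ F_\lambda$ does not take values in $\CF(R)$, already on representable functors, and no hypothesis on $R$ changes this. Concretely, for $H=\Hom_A(-,X)$ with $X\cong F_\lambda(N)\neq 0$ one has $\Phi(H)\cong\Hom_R(-,F^{\bullet}X)\cong\bigoplus_{g\in G}\Hom_R(-,gN)$ as functors on $\mod(R)$, using $F^{\bullet}F_\lambda(N)\cong\bigoplus_{g\in G}gN$. By Yoneda, any natural transformation $\Hom_R(-,M)\to\bigoplus_{g\in G}\Hom_R(-,gN)$ corresponds to an element of $\bigoplus_{g\in G}\Hom_R(M,gN)$, hence factors through a finite subsum; evaluating at an object $g'N$ with $g'$ outside that finite set shows that no such transformation is an epimorphism. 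Since $G$ is infinite (it is torsion-free, and if it were trivial there would be nothing to prove), $\bigoplus_{g\in G}\Hom_R(-,gN)$ is not even finitely generated, let alone finitely presented. This is intrinsic to the functor and completely independent of whether $R$ is locally support-finite or intervally finite, so your proposed fix --- "show that these locally representable functors admit finite presentations" --- is provably impossible, and the plan cannot be completed as stated.

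The route that does work is the one you mention only in passing and then abandon: induce a functor in the \emph{push-down} direction $\CF(R)\to\CF(A)$, defined on presentations by $\Hom_R(-,M)\mapsto\Hom_A(-,F_\lambda M)$, and prove that it is well defined, exact, full and dense, so that Lemma \ref{0}$(a)$ yields $\KG(A)\leq\KG(R)$. This is essentially what \cite{GP0} does, and it is where the hypotheses are genuinely spent: well-definedness and fullness rest on the covering formula $\Hom_A(F_\lambda M,F_\lambda N)\cong\bigoplus_{g\in G}\Hom_R(gM,N)$, density rests on the Dowbor--Skowro\'nski theorem (local support-finiteness plus torsion-freeness), and interval finiteness enters in controlling supports; moreover \cite{GP0} must establish \emph{both} inequalities this way, since the general Theorem \ref{A} of \cite{GP2} did not exist at the time. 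Precomposition with $F_\lambda$, by contrast, only lands in the category of all additive functors, which carries no useful comparison of Krull--Gabriel filtrations of the finitely presented ones.
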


In the study of weighted surface algebras and hybrid algebras, the following classes of algebras play a prominent role: \emph{string algebras} \cite{BuRi,SkWa}, \emph{pg-critical algebras} \cite{NorSko} and \emph{standard selfinjective algebras} \cite{Sk2}. The following theorem describes Krull-Gabriel dimensions of these classes. We apply these results in the forthcoming sections.



\begin{thm} \label{00000}
Let $A$ be a finite dimensional algebra over an algebraically closed field $K$. The following assertions hold.
\begin{enumerate}[\rm(1)] 
\item \label{Sch} If $A$ is a non-domestic string algebra, then $\KG(A)=\infty$.
\item \label{KP} If $A$ is a pg-critical algebra, then $\KG(A)=\infty$.
\item \label{P} If $A$ is standard selfinjective algebra of infinite representation type, then:
\begin{enumerate}[\rm(a)]
\item If $A$ is domestic, then  $\KG(A)=2$,
\item If $A$ non-domestic of polynomial growth, then $\KG(A)=\infty$.
\end{enumerate}
\end{enumerate}
\end{thm}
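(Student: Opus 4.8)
The plan is to treat the three assertions as results of genuinely different flavours, and to prove each by reducing, as far as possible, to the covering machinery of Section~2 together with the known combinatorial classifications of the algebras in question.

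For assertion (3) the starting point is Skowro\'nski's description of standard selfinjective algebras of polynomial growth: each such $A$ of infinite representation type is isomorphic to an orbit category $\widehat{B}/G$, where $\widehat{B}$ is the repetitive category of a tilted algebra $B$ and $G$ is an admissible, infinite cyclic (hence torsion-free) group of automorphisms, with $B$ tilted of Euclidean type in the domestic case and of tubular type in the non-domestic polynomial growth case \cite{Sk2}. Since these algebras are of polynomial growth, the repetitive categories $\widehat{B}$ are locally support-finite and intervally finite, so the associated Galois covering $\widehat{B}\to A$ satisfies the hypotheses of Theorem~\ref{B} and therefore $\KG(A)=\KG(\widehat{B})$. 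It then remains to compute the Krull-Gabriel dimension of the repetitive category directly. For $B$ of Euclidean type one analyses the filtration of $\CF(\widehat{B})$ layer by layer: the finite-length functors form level $0$, the functors encoding the one-parameter families of stable tubes become of finite length modulo level~$0$ at level~$1$, and the generic behaviour is captured at level~$2$, whence $\KG(\widehat{B})=2$. For $B$ tubular the one-parameter families are indexed densely by the slopes in $\mathbb{Q}_{\ge 0}\cup\{\infty\}$, and this dense hierarchy prevents the filtration from ever stabilising, so $\KG(\widehat{B})=\infty$. These two computations are precisely the content of \cite{GP0}*{Theorem 7.3, Theorem 8.1}, which I would invoke.

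Assertions (1) and (2) are of a combinatorial (and partly model-theoretic) nature, external to the covering framework. For (1) I would appeal to Schr\"oer's analysis of the Krull-Gabriel filtration of string algebras via hammocks: he proved that a string algebra has finite Krull-Gabriel dimension exactly when it is domestic, so a non-domestic string algebra has $\KG=\infty$; equivalently, such an algebra carries a superdecomposable pure-injective module, which is incompatible with finiteness of the Krull-Gabriel dimension. For (2), pg-critical algebras are by definition minimal of non-polynomial growth and were completely classified by Nowak and Skowro\'nski \cite{NorSko} into finitely many families. Because \emph{every} proper convex subcategory of a pg-critical algebra is of polynomial growth, one cannot reduce to a proper subcategory through Lemma~\ref{00}$(a)$; the statement $\KG=\infty$ must instead be established for the pg-critical algebras themselves, by checking on the classification that the filtration of $\CF(A)$ never stabilises, and I would cite the dedicated computation carrying this out.

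The main obstacle is concentrated in assertion~(3): one must verify that the repetitive categories $\widehat{B}$ are genuinely locally support-finite and intervally finite, so that Theorem~\ref{B} applies and the covering is dimension-\emph{preserving}, rather than merely dimension-non-increasing as in Theorem~\ref{A} --- this sharper conclusion is exactly what is needed to pin down the \emph{exact} value $2$ in the Euclidean case rather than $1$ or $3$. The accompanying explicit filtration computations on $\widehat{B}$ (finiteness at level~$2$ in the domestic case and non-stabilisation in the tubular case) are the technical heart. For assertions (1) and (2) the difficulty lies entirely in the combinatorial input --- Schr\"oer's hammock calculus and the Nowak--Skowro\'nski classification --- which I would import rather than reprove.
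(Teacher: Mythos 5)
Your proposal is correct and takes essentially the same route as the paper, whose proof is likewise purely by citation: Schr\"oer's result \cite{Sch} for non-domestic string algebras, the Kasjan--Pastuszak result \cite{KP} for pg-critical algebras, and \cite{GP0}*{Theorems 7.3 and 8.1} (i.e.\ Theorem \ref{B}) for standard selfinjective algebras of polynomial growth. The only slips are cosmetic: the pg-critical classification is due to N\"orenberg--Skowro\'nski rather than ``Nowak'', and the cited computation for (2) does not proceed by showing the filtration of $\CF(A)$ never stabilises but by proving that the width of the lattice of pointed modules is infinite, which then yields $\KG(A)=\infty$ by general model-theoretic results.
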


\begin{proof} The proof of (\ref{Sch}) was given by Schr\"{o}er in \cite{Sch}*{Proposition 2}. The implication of (\ref{KP}) was proved by Kasjan and Pastuszak in \cite{KP}. In fact, the results of \cite{KP} show that the width of the lattice of all pointed modules over pg-critical algebras is infinite. This yields that Krull-Gabriel dimension is undefined, see for example \cite{Pr}. The assertion of (\ref{P}) was proved in \cite{GP0}*{Theorem 8.1}, based on Theorem \ref{B}.
\end{proof}


\section{Weighted surface algebras and hybrid algebras}
In this section we recall the definitions and basic properties of weighted surface algebras and, more generally, of hybrid algebras.
Definitions are presented  at the technical level that is sufficient for our purposes, for details we refer to \cite{ES1,ES4,ES5,ES6}.

Recall that for a quiver $Q$, by $Q_0$ and $Q_1$ we denote the set of vertices and the set of arrows between them, respectively. By $KQ$ we denote the path algebra of $Q$ over $K$, see \cite{ASS}. 
The underlying space has as basis the set of all paths in $Q$.
Let $R_Q$ be the ideal of $KQ$ generated by all paths of length $\geq 1$.
We will consider algebras of the form $A=KQ/I$ where $I$ is an ideal of $KQ$
which contains $R_Q^m$ for some $m\geq 2$, so that
the algebra is finite-dimensional and basic. Note that $I$ does not need to be admissible, that is, $I$ may not be contained in $R_Q^2$.
Therefore the Gabriel quiver $Q_A$ of $A$ is the full subquiver of $Q$
obtained from $Q$ by removing all arrows $\alpha$ with $\alpha +I \in R_Q^2+I$.

We say that a quiver $Q$ is \emph{$2$-regular} if for each vertex $i\in Q_0$ there are precisely two arrows starting at $i$ and two arrows ending at $i$. Any $2$-regular quiver admits an involution on arrows, $\alpha \mapsto \ba$, such that $\ba$ is different from $\alpha$ but starts at the same vertex. Further, a \emph{biserial quiver} is a pair $(Q, f)$ where $Q$ is a (finite) connected 2-regular quiver and  $f$ is a fixed permutation of the arrows such that $f(\alpha)$ starts at the vertex that $\alpha$ ends at. 
The set of arrows in the $f$-orbit of length $3$ or $1$ is called a \emph{triangle}. If all cycles of $f$ form triangles, that is $f^3=\id$, then we say that $(Q, f)$ is a \emph{triangulation quiver}. Note that in any biserial quiver the permutation $f$ uniquely determines a permutation $g$ on the set of arrows, given by $g(\alpha):=\overline{f(\alpha)}$ for any arrow $\alpha$. Further, on the set of $g$-orbits $\mathcal{O}(g)$ we define $m_{\bullet}: \mathcal{O}(g) \ra \mathbb{N}^*$ and $c_{\bullet}: \mathcal{O}(g) \ra K^*$, called \emph{the weight} and \emph{the parameter} function, respectively. We set $m_\alpha=m_{\bullet}(\mathcal{O}(\alpha))$ and $c_\alpha=c_{\bullet}(\mathcal{O}(\alpha))$, for any arrow $\alpha$. By $n_{\alpha}$ we denote the length of the $g$-orbit of $\alpha$ and if $m_{\alpha}n_{\alpha}=2$ we call $\alpha$ to be a \emph{virtual} arrow.

Let now $T$ be a triangulation of a compact connected real surface $S$ with an orientation $\vec{T}$ of triangles. Then with a directed triangulated surface $(S,\vec{T})$ we associate a triangulation quiver ($Q(S, \vec{T}),f)$, where a permutation $f$ of arrows in $Q(S, \vec{T})$ reflects the orientation $\vec{T}$ of triangles in $T$, see e.g. \cite{ES1}. Note that, conversely, every triangulation quiver $(Q,f)$ is a triangulation quiver $Q(S, \vec{T})$ associated to some directed triangulated surface $(S, \vec{T})$  (see \cite{ES1}*{Theorem 4.11} and also \cite{L}). Therefore, in the forthcoming, we will not distinguish between these quivers.

The algebra $$\La = \La(Q, f, m_{\bullet}, c_{\bullet}) = KQ/I$$
is called  a \emph{weighted surface algebra} if
$(Q, f)=(Q(S, \vec{T}),f)$ is a triangulation quiver (of the surface $(S,\vec{T})$) with $|Q_0| \geq 2$, $m_{\alpha}n_{\alpha}\geq 2$ for any arrow $\alpha$,
and $I= I(Q, f, m_{\bullet}, c_{\bullet})$ is the ideal of
$KQ$ generated by:
\begin{enumerate}[(i)]
 \item
	$\alpha f(\alpha) - c_{\ba}A_{\ba}$ for all arrows $\alpha$ of $Q$, where $A_{\alpha}:= \alpha g(\alpha)\ldots g^{m_{\alpha}n_{\alpha}-2}(\alpha)$ is  the path along $g$-orbit of length $m_{\alpha}n_{\alpha}-1$,
 \item
	$\alpha f(\alpha) g(f(\alpha))$ \ for all arrows $\alpha$ of $Q$
	unless $\ba$ is virtual, or unless $f(\ba)$ is virtual and $m_{\ba}=1$, $n_{\ba}=3$,
 \item
	$\alpha g(\alpha)f(g(\alpha))$ for all arrows $\alpha$ of $Q$ unless $f(\alpha)$ is virtual, or unless $f^2(\alpha)$ is virtual and $m_{f(\alpha)}=1$, $n_{f(\alpha)}=3$.
\end{enumerate}

The above definition comes from \cite{ES4} where the authors generalized the definition of weighted surface algebras given in \cite{ES1} (see \cite{ES4}*{Section 2} for details). The new approach does not require the assumption that the Gabriel quiver is $2$-regular. This implies, for example, that almost all algebras of quaternion type \cite{E} become weighted surface algebras in the sense of \cite{ES4}.

\begin{rmk} The ideal $I$ of a weighted surface algebra $\Lambda=KQ/I$ does not have to be admissible. Indeed, observe that if $\alpha$ is a virtual arrow, then by $\rm{(i)}$ we have $\ba f(\ba)-c_{\alpha}\alpha \in I$ and hence $I\nsubseteq R_Q^2$. We conclude that the Gabriel quiver $Q_{\La}$ of $\La$ is obtained from $Q$ by removing all virtual arrows.
\end{rmk}



Among the class of weighted surface algebras, some families of algebras are distinguished due to their exceptional properties. These are the \emph{disc algebras} $D(\lambda)$,  the \emph{tetrahedral algebras} $\Lambda(\lambda)$, the \emph{triangle algebras} $T(\lambda)$ and the \emph{spherical algebras} $S(\lambda)$, where $\lambda \in K^*$. The names of the families are related to the triangulations of surface, namely the unit disc $D^2$ in $\mathbb{R}^2$ in the case of $D(\lambda)$ and the sphere $S^2$ in $\mathbb{R}^3$ for other cases. For any algebra $A$ from these families, by $(Q(A),f)$ we denote the underlying triangulation quiver of $A$. For the purpose of this article we recall now the quotients $A=KQ(A)/I$, referring the reader to \cite{ES4}*{Section 3} for more details on these algebras.

\begin{itemize}
 \item[(1)] A \emph{disc algebra} $D(\lambda)$, for $\lambda \in K^*$, is given by the $2$-regular quiver $Q(D(\lambda))$ of the form:
\[
  \xymatrix{
    1
    \ar@(ld,ul)^{\alpha}[]
    \ar@<.5ex>[r]^{\beta}
    & 2
    \ar@<.5ex>[l]^{\gamma}
    \ar@(ru,dr)^{\sigma}[]
  }
,
\]
the permutation $f$ with orbits $(\alpha\; \beta\; \gamma)$, $(\sigma)$
and the following relations:
\begin{align*}
 \beta \gamma &= \lambda \alpha^2 ,
 &
 \alpha\beta &=  \beta \sigma ,
 &
 \gamma \alpha &=  \sigma \gamma ,
 &
 \sigma^2 &=  \gamma \beta ,
 &
 \alpha\beta\sigma &= 0,
 &
 \beta\gamma\beta &= 0 ,
\\
 \gamma \alpha^2 &= 0,
 &
 \sigma^2 \gamma &= 0 ,
 &
 \alpha^2 \beta &= 0,
 &
 \beta \sigma^2 &= 0 ,
 &
 \sigma \gamma \alpha &= 0,
 &
 \gamma \beta \gamma &= 0 .
\end{align*}
The weight function is given by $m_{\alpha}=3$ and $m_{\beta}=1$, and the parameter function is  $c_{\alpha}=\lambda$ and $c_{\beta}=1$.

\item[(2)] A \emph{tetrahedral algebra} $\Lambda(\lambda)$, for $\lambda \in K^*$, is given by the $2$-regular quiver $Q(\Lambda(\lambda))$:
\[
\begin{tikzpicture}
[scale=.85]
\node (1) at (0,1.72) {$1$};
\node (2) at (0,-1.72) {$2$};
\node (3) at (2,-1.72) {$3$};
\node (4) at (-1,0) {$4$};
\node (5) at (1,0) {$5$};
\node (6) at (-2,-1.72) {$6$};
\coordinate (1) at (0,1.72);
\coordinate (2) at (0,-1.72);
\coordinate (3) at (2,-1.72);
\coordinate (4) at (-1,0);
\coordinate (5) at (1,0);
\coordinate (6) at (-2,-1.72);
\node (1) at (0,1.72) {$1$};
\node (2) at (0,-1.72) {$2$};
\node (3) at (2,-1.72) {$3$};
\node (4) at (-1,0) {$4$};
\node (5) at (1,0) {$5$};
\node (6) at (-2,-1.72) {$6$};
\draw[->,thick] (-.23,1.7) arc [start angle=96, delta angle=108, radius=2.3cm] node[midway,left] {$\nu$};
\draw[->,thick] (-1.87,-1.93) arc [start angle=-144, delta angle=108, radius=2.3cm] node[midway,below] {$\mu$};
\draw[->,thick] (2.11,-1.52) arc [start angle=-24, delta angle=108, radius=2.3cm] node[midway,right] {$\sigma$};
\draw[->,thick]
 (1) edge node [right] {$\delta$} (5)
(2) edge node [right] {$\varepsilon$} (5)
(2) edge node [below] {$\varrho$} (6)
(3) edge node [below] {$\beta$} (2)
 (4) edge node [left] {$\gamma$} (1)
(4) edge node [left] {$\alpha$} (2)
(5) edge node [right] {$\xi$} (3)
 (5) edge node [below] {$\eta$} (4)
(6) edge node [left] {$\omega$} (4)
;
\end{tikzpicture}
\]
the permutation $f$ with orbits $(\delta\; \eta\; \gamma)$, $(\alpha\; \varrho\; \omega)$, $(\xi\; \beta\; \varepsilon)$, $(\nu\; \mu\; \sigma)$
and  the following relations:
\begin{align*}
  \gamma \delta &= \lambda \alpha \varepsilon ,
  &
  \varrho \omega &= \lambda \varepsilon \eta ,
  &
  \xi \beta &= \lambda \eta \alpha ,
  &
  \alpha \varrho &= \gamma \nu ,
  &
 \delta \eta &= \nu \omega ,
  &
  \omega \alpha &= \mu \beta ,
  \\
  \beta \varepsilon &= \sigma \delta ,
  &
  \varepsilon \xi &= \varrho \mu ,
  &
  \eta \gamma &= \xi \sigma ,
  &
  \sigma \nu &= \beta \varrho ,
  &
  \nu \mu &= \delta \xi ,
  &
  \mu \sigma &= \omega \gamma ,
\end{align*}
\begin{center}
$\tau f(\tau)g(f(\tau)) =0 \quad \text{and} \quad   \tau g(\tau)f(g(\tau))=0 \qquad$ for all arrows $\tau \in Q_1.$ \\
\end{center}
Here the weight function is trivial, and $c_{\alpha}=\lambda$ and the other parameters equal $1$.
\end{itemize}
Note that there is a connection between the disc algebra $D(\lambda)$ and the tetrahedral algebra $\Lambda(\lambda)$, for any $\lambda \in
K^*$. Namely, the  cyclic group of order $3$ acts on  $\Lambda(\lambda)$  by  cyclic rotation of vertices and arrows of the quiver
$Q(\Lambda(\lambda))$:
\begin{align*}
 (1\ 6\ 3),
  &&
 (4\ 2\ 5),
  &&
 (\alpha\ \varepsilon\ \eta),
  &&
  (\beta\ \delta\ \omega),
  &&
  (\gamma\ \varrho\ \xi),
  &&
  (\sigma\ \nu\ \mu) .
\end{align*}
Then $D(\lambda)$ is the orbit algebra $\Lambda(\lambda)/\mathbb{Z}_3$.\\
\begin{itemize}
\item[(3)] A \emph{triangle algebra} $T(\lambda)$, for $\lambda \in K^*$, is given by the $2$-regular quiver $Q(T(\lambda))$:
\[
  \xymatrix@R=3.pc@C=1.8pc{
    1
    \ar@<.35ex>[rr]^{\alpha_1}
    \ar@<.35ex>[rd]^{\beta_3}
    && 2
   \ar@<.35ex>[ll]^{\beta_1}
    \ar@<.35ex>[ld]^{\alpha_2}
    \\
    & 3
    \ar@<.35ex>[lu]^{\alpha_3}
    \ar@<.35ex>[ru]^{\beta_2}
  }
\]
the permutation $f$ with orbits
$(\alpha_1\; \alpha_2\; \alpha_3)$ and $(\beta_1\; \beta_3\; \beta_2)$ and the
relations:
\begin{align*}
 \lambda \alpha_1 \beta_1 \alpha_1 &= \beta_3 \beta_2 ,
 &
\lambda \beta_1 \alpha_1 \beta_1 &= \alpha_2 \alpha_3,
 &
\beta_2 \alpha_2 \beta_2 &= \alpha_3 \alpha_1,
& 
\alpha_2 \beta_2 \alpha_2&= \beta_1 \beta_3 ,
\\
 &&
 \beta_2 \beta_1 &=  \alpha_3 ,
 &
 \alpha_1 \alpha_2 &=  \beta_3 ,
 &&
\\
 \alpha_2 \alpha_3 \beta_3 &= 0,
 &
 \alpha_3 \alpha_1 \beta_1 &= 0,
 &
 \beta_1 \beta_3 \alpha_3 &= 0,
 &
 \beta_3 \beta_2 \alpha_2 &= 0,
\\
\alpha_1 \beta_1 \beta_3 &= 0,
 &
 \alpha_3 \beta_3 \beta_2 &= 0,
 &
 \beta_2 \alpha_2 \alpha_3 &= 0,
 &
 \beta_3 \alpha_3 \alpha_1 &= 0.
\end{align*}
The weight function is given by $m_{\alpha_1}=2=m_{\alpha_2}$ and $m_{\alpha_3}=1$, in particular $\alpha_3$ and $\beta_3$ are virtual. The parameters are $c_{\alpha_1} =\lambda$ and the others are equal to $1$.

\item[(4)] A \emph{spherical algebra} $S(\lambda)$, for $\lambda \in K^*$, is given by the $2$-regular quiver
$Q(S(\lambda))$:
\[
\begin{tikzpicture}
[->,scale=.9]
\coordinate (1) at (0,2);
\coordinate (2) at (-1,0);
\coordinate (2u) at (-.925,.15);
\coordinate (2d) at (-.925,-.15);
\coordinate (3) at (0,-2);
\coordinate (4) at (1,0);
\coordinate (4u) at (.925,.15);
\coordinate (4d) at (.925,-.15);
\coordinate (5) at (-3,0);
\coordinate (5u) at (-2.775,.15);
\coordinate (5d) at (-2.775,-.15);
\coordinate (6) at (3,0);
\coordinate (6u) at (2.775,.15);
\coordinate (6d) at (2.775,-.15);
\node [fill=white,circle,minimum size=4.5] (1) at (0,2) {\ \quad};
\node [fill=white,circle,minimum size=4.5] (2) at (-1,0) {\ \quad};
\node [fill=white,circle,minimum size=4.5] (3) at (0,-2) {\ \quad};
\node [fill=white,circle,minimum size=4.5] (4) at (1,0) {\ \quad};
\node [fill=white,circle,minimum size=4.5] (5) at (-3,0) {\ \quad};
\node [fill=white,circle,minimum size=4.5] (6) at (3,0) {\ \quad};
\node (1) at (0,2) {1};
\node (2) at (-1,0) {2};
\node (2u) at (-1,0.15) {\ \quad};
\node (2d) at (-1,-0.15) {\ \quad};
\node (3) at (0,-2) {3};
\node (4) at (1,0) {4};
\node (4u) at (1,0.15) {\ \quad};
\node (4d) at (1,-0.15) {\ \quad};
\node (5) at (-3,0) {5};
\node (5u) at (-2.775,0.15) {};
\node (5d) at (-2.775,-0.15) {};
\node (6) at (3,0) {6};
\node (6u) at (2.775,0.15) {};
\node (6d) at (2.775,-0.15) {};
\draw[thick,->]
(1) edge node[below right]{\footnotesize$\alpha$} (2)
(2u) edge node[above]{\footnotesize$\xi$} (5u)
(5u) edge node[above left]{\footnotesize$\delta$} (1)
(5d) edge node[below]{\footnotesize$\eta$} (2d)
(2) edge node[above right]{\footnotesize$\beta$} (3)
(3) edge node[below left]{\footnotesize$\nu$} (5d)
(1) edge node[above right]{\footnotesize$\varrho$} (6u)
(6u) edge node[above]{\footnotesize$\varepsilon$} (4u)
(4) edge node[below left]{\footnotesize$\sigma$} (1)
(4d) edge node[below]{\footnotesize$\mu$} (6d)
(6d) edge node[below right]{\footnotesize$\omega$} (3)
(3) edge node[above left]{\footnotesize$\gamma$} (4)
;
\end{tikzpicture}
\]
the permutation $f$ with orbits $(\alpha\; \xi\; \delta)$, $(\varrho\; \varepsilon\; \sigma)$, $(\omega\; \gamma\; \mu)$, $(\beta\; \nu\; \eta)$
and the relations:
\begin{align*}
  \mu \omega &= \lambda \sigma \alpha \beta ,
  &
  \xi \delta &= \lambda \beta \gamma \sigma ,
  &
  \varrho \varepsilon &= \lambda \alpha \beta \gamma ,
  &
  \nu \eta &= \lambda \gamma \sigma \alpha ,
\\
 \sigma \varrho &=  \mu ,
 &
 \delta \alpha &=  \eta ,
 &
 \beta \nu &=  \xi ,
 &
 \omega \gamma &=  \varepsilon ,
\\
  \eta \beta &=   \delta \varrho \omega ,
  &
  \gamma \mu &=   \nu \delta \varrho ,
  &
  \alpha \xi &=  \varrho \omega \nu ,
  &
  \varepsilon \sigma &=  \omega \nu \delta ,
\\
  \alpha \xi \eta &= 0 ,
  &
  \xi \delta \varrho &= 0 ,
  &
  \nu \eta \xi &= 0 ,
  &
  \eta \beta \gamma &= 0 ,
\\
  \gamma \mu \varepsilon &= 0 ,
  &
  \mu \omega \nu &= 0 ,
  &
  \varrho \varepsilon \mu &= 0 ,
  &
  \varepsilon \sigma \alpha &= 0 ,
\\
  \beta \gamma \mu &= 0 ,
  &
  \sigma \alpha \xi &= 0 ,
  &
  \delta \varrho \varepsilon &= 0 ,
  &
  \omega \nu \eta &= 0 ,
\\
  \xi \eta \beta &= 0 ,
  &
  \eta \xi \delta &= 0 ,
  &
  \mu \varepsilon \sigma &= 0 ,
  &
  \varepsilon \mu \omega &= 0 .
\end{align*}
The weight function is trivial, and $\mu, \eta, \xi$ and $\varepsilon$
are virtual. The parameters are $c_{\sigma} = \lambda$ and the other parameters
are equal to $1$.
\end{itemize}
Following \cite[Section 3]{ES4} we conclude that there is a natural action of the cyclic group of order $2$ on $S(\lambda)$ given by the cyclic
rotation of vertices and arrows of the quiver $Q(S(\lambda))$:
\begin{align*}
 (1\ 3),
  &&
 (2\ 4),
  &&
 (5\ 6),
  &&
 (\alpha\ \gamma),
  &&
  (\beta\ \sigma),
  &&
  (\varrho\ \nu),
&&
  (\omega\ \delta) .
\end{align*}
Then the triangle algebra $T(\lambda)$ is the orbit algebra $S(\lambda)/\mathbb{Z}_2$, for any  $\lambda \in K^*$.

If $\lambda = 1$, then we call the algebras $D(1)$, $\Lambda(1)$, $T(1)$ and $S(1)$ the \emph{singular} disc, tetrahedral, triangle and spherical
algebra, respectively. If $\lambda \in K \setminus \{ 0,1\}$ we say that $D(\lambda)$, $\Lambda(\lambda)$, $T(\lambda)$ and $S(\lambda)$ are
\emph{non-singular}. The following facts from \cite{ES4} explain why disc, tetrahedral, triangle and spherical algebras should be considered
separately.

\begin{thm} \label{C}
Let $\Lambda$ be a weighted surface algebra. Then $\Lambda$ is tame of non-domestic type and the following assertions hold:
\begin{enumerate}[\rm(1)]
\item $\Lambda$ is periodic of period $4$ if and only if it is not isomorphic to $D(1), \Lambda(1), T(1), S(1)$.
\item $\Lambda$ is symmetric if and only if it is not isomorphic to $T(1)$, $S(1)$.
\item $\Lambda$ is tame of polynomial growth if and only if it is isomorphic to $D(\lambda)$, $\Lambda(\lambda)$, $T(\lambda)$ and $S(\lambda)$, $\lambda \in K\backslash \{0,1\}$.
\end{enumerate}
\end{thm}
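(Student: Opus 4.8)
The plan is to handle the four assertions -- tameness together with non-domesticity, the period-$4$ characterization, the symmetry characterization, and the polynomial-growth characterization -- separately, since each concerns a different invariant of $\Lambda$. The common starting point is the observation that the defining ideal $I=I(Q,f,m_\bullet,c_\bullet)$ places $\Lambda$ very close to a special biserial (Brauer graph) algebra: discarding the commutativity relations of type (i) and keeping only the monomial zero-relations of types (ii) and (iii) produces a special biserial algebra $B$ to which $\Lambda$ degenerates. This degeneration is the engine for the tameness and growth statements.

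\emph{Tameness and non-domesticity.} First I would establish that $B$ is tame by invoking the classification of special biserial, in particular Brauer graph, algebras as tame, and then transfer tameness upward to $\Lambda$ by Geiss's semicontinuity theorem, which ensures that a flat degeneration of a tame algebra is again tame. Non-domesticity of every weighted surface algebra would then follow from the string combinatorics of $B$: since $|Q_0|\geq 2$, the triangulation data force $B$ to admit unboundedly many bands, which rules out the bound on one-parameter families required by domesticity.

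\emph{Symmetry.} This is the most self-contained part, and I would carry it out by direct construction. Because $R_Q^m\subseteq I$, the algebra $\Lambda$ has a one-dimensional socle at each vertex, spanned by the classes of the maximal nonzero paths coming from the relations of type (i). I would define a symmetrizing form $\varphi\colon\Lambda\to K$ by reading off the coefficient of these socle paths in a fixed path basis, and then verify associativity $\varphi(ab)=\varphi(ba)$ and non-degeneracy directly against relations (i)--(iii). The singular triangle and spherical algebras $T(1)$ and $S(1)$ must be excluded by hand: at the singular value $\lambda=1$ the candidate form degenerates, so these algebras are only weakly symmetric and admit no non-degenerate symmetric form.

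\emph{Periodicity and growth.} For the period-$4$ statement I would construct the minimal projective bimodule resolution of $\Lambda$ over $\Lambda^e$, compute its first terms from the relations, identify the emerging pattern, and exhibit an isomorphism $\Omega^4_{\Lambda^e}(\Lambda)\cong {}_1\Lambda_\nu$ with the Nakayama twist of $\Lambda$; away from the four singular algebras $D(1),\Lambda(1),T(1),S(1)$ this twist is isomorphic to $\Lambda$, giving period $4$, while each singular case is checked individually to break the pattern. The main obstacle, which I expect to be the hardest step, is the polynomial-growth dichotomy: one must show that the polynomial-growth locus is exactly the non-singular disc, tetrahedral, triangle and spherical algebras. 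Here I would pass to Galois coverings and match each small exceptional algebra $D(\lambda),\Lambda(\lambda),T(\lambda),S(\lambda)$ with $\lambda\neq 0,1$ against the known classification of (standard) selfinjective algebras of polynomial growth, while for every remaining weighted surface algebra -- the larger surfaces via the band count in $B$, and the singular small algebras by direct inspection of their degenerations -- one verifies non-polynomial growth. Isolating this finite exceptional list cleanly, and confirming that the boundary value $\lambda=1$ genuinely falls outside it, is the delicate combinatorial heart of the argument.
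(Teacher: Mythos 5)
Your proposal reproduces the paper's route for tameness: the paper (which recalls this theorem from \cite{ES4} and sketches its ingredients) degenerates $\Lambda$ to a biserial weighted surface algebra, invokes that special biserial algebras are tame by \cite{WW}, and lifts tameness back to $\Lambda$ by \cite{Ge}. Your symmetry and period-$4$ outlines are also the right shape. However, there are two genuine gaps, both concerning how representation-type information is transported.

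First, you transport the wrong invariants along the degeneration. Geiss's theorem states that an algebra which \emph{degenerates to} a tame algebra is itself tame (you state the converse, although you then apply it in the correct direction); it transfers tameness and nothing finer. Band modules of the degenerated special biserial algebra $B$ are not $\Lambda$-modules, so ``unboundedly many bands of $B$'' cannot rule out domesticity of $\Lambda$. What the paper and \cite{ES4} actually use for growth and non-domesticity is not the degeneration but the string algebra \emph{quotient} $\Gamma=KQ/L$, with $L$ generated by the monomials $\alpha f(\alpha)$ and $A_{\alpha}$: since $\mod(\Gamma)$ embeds as a full subcategory of $\mod(\Lambda)$, non-domesticity or non-polynomial growth of $\Gamma$ does pass up to $\Lambda$. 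Your proposal conflates these two constructions throughout.

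Second, even after replacing $B$ by $\Gamma$, the claim that $|Q_0|\geq 2$ forces unboundedly many bands is false precisely for the exceptional families: as the paper notes, the string quotients of $D(\lambda)$, $\Lambda(\lambda)$, $T(\lambda)$, $S(\lambda)$, $D(\lambda)^{(1)}$, $D(\lambda)^{(2)}$ are \emph{domestic} for every $\lambda\in K^*$. For these algebras non-domesticity is invisible on any string quotient; it comes from identifying them (via the orbit-algebra constructions $D(\lambda)\cong\Lambda(\lambda)/\mathbb{Z}_3$ and $T(\lambda)\cong S(\lambda)/\mathbb{Z}_2$) with trivial extensions of tubular algebras in the non-singular cases (non-domestic of polynomial growth, by \cite{Sk1} and \cite{BiS}) and with trivial extensions of pg-critical algebras in the singular cases $\lambda=1$ (non-polynomial growth), which is exactly how the paper's discussion and case (3) of the proof of Theorem \ref{wsa} proceed. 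Your polynomial-growth paragraph gestures at the correct classification, but as written the non-domesticity assertion of the theorem rests on a false combinatorial premise and is not established.
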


\noindent In order to proof the tameness, Erdmann and Skowro\'nski showed that almost all weighted surface algebras degenerate to \emph{biserial weighted surface algebras} \cite{ES4}, yet another class of algebras associated to triangulated surfaces. Biserial weighted surface algebras are special biserial \cite[Proposition 2.11]{ES4} and thus tame by \cite{WW}. Then we conclude from \cite{Ge} that weighted surface algebras are tame as well. The fact that the algebras given in Theorem \ref{C}(3) are non-domestic of polynomial growth follows from \cite{Sk1}*{Theorem} and \cite{BiS}*{Theorem 4.2}, we refer also to the proof of Theorem \ref{support} for more details. To prove that the remaining weighted surface algebras are of non-polynomial growth, two more families of algebras were distinguished: $D(\lambda)^{(1)}$ and $D(\lambda)^{(2)}$. These families are related to different triangulations of the unit disc than in the case of the disc algebra $D(\lambda)$, see \cite[Example 6.8]{ES4}.

An algebra $D(\lambda)^{(1)}$, for $\lambda \in K^*$, is given by the $2$-regular quiver $Q(D(\lambda)^{(1)})$:
\[
  \xymatrix{
    1
    \ar@(ld,ul)^{\xi}[]
    \ar@<.5ex>[r]^{\beta}
    & 3
    \ar@<.5ex>[l]^{\alpha}
    \ar@(ru,dr)^{\gamma}[]
  }
\]
the permutation $f$ with orbits
$(\xi\ \beta\ \alpha)$, $(\gamma)$,
and the relations:
\begin{align*}
&&
 \alpha \xi &= \lambda \gamma \alpha \beta \gamma \alpha ,
 &
 \xi \beta &= \lambda \beta \gamma \alpha \beta \gamma ,
 &
 \gamma^2 &= \lambda \alpha \beta \gamma \alpha \beta ,
 &
 \beta \alpha &=  \xi ,
  &
 \\
 \alpha \xi^2 &= 0 ,
 &
 \xi \beta \gamma &= 0 ,
 &
 \gamma^2 \alpha &= 0 ,
 &
 \xi^2 \beta &= 0 ,
 &
 \gamma \alpha \xi &= 0 ,
 &
 \beta \gamma^2 &= 0.
\end{align*}

\noindent An algebra $D(\lambda)^{(2)}$, for $\lambda \in K^*$, is given by the $2$-regular quiver $Q(D(\lambda)^{(2)})$:
\[
  \xymatrix@R=1pc{
   & 1 \ar[rd]^{\beta} \ar@<-.5ex>[dd]_{\xi}  \\
   3   \ar@(ld,ul)^{\varrho}[]  \ar[ru]^{\alpha}
   && 4   \ar@(ru,dr)^{\gamma}[]  \ar[ld]^{\nu}   \\
   & 2 \ar[lu]^{\delta}  \ar@<-.5ex>[uu]_{\eta}
  }
\]
the permutation $f$ with orbits
$(\alpha\ \xi\ \delta)$, $(\beta\ \nu\ \eta)$, $(\varrho)$, $(\gamma)$
and the set of relations:
\begin{align*}
 \alpha \xi &= \lambda \varrho \alpha \beta \gamma \nu ,
 &
 \xi \delta &= \lambda \beta \gamma \nu \delta \varrho ,
 &
 \varrho^2 &= \lambda \alpha \beta \gamma \nu \delta ,
 &
 \delta \alpha &=  \eta ,
 &
 \alpha  \xi \eta &= 0 ,
 \\
 \nu \eta &= \lambda \gamma \nu \delta \varrho \alpha ,
 &
 \eta \beta &= \lambda \delta \varrho \alpha \beta \gamma ,
 &
 \gamma^2 &= \lambda \nu \delta \varrho \alpha \beta ,
 &
 \beta \nu &=  \xi ,
 &
 \xi \eta \beta &= 0 ,
 \\
 \xi \delta \varrho &= 0 ,
 &
 \nu \eta \xi &= 0 ,
 &
 \eta \beta \gamma &= 0 ,
 &
 \varrho^2 \alpha &= 0 ,
 &
  \gamma^2 \nu &= 0 ,
 \\
 \varrho \alpha \xi &= 0 ,
 &
 \eta \xi \delta &= 0 ,
 &
 \gamma \nu \eta &= 0 ,
 &
 \delta \varrho^2 &= 0 ,
 &
 \beta \gamma^2 &= 0 .
\end{align*}
Note that the quiver $Q(D(\lambda)^{(1)})$ is the orbit quiver $Q(D(\lambda)^{(2)})/H$ of the quiver $Q(D(\lambda)^{(2)})$
with respect to the action of the cyclic group $H$ of order $2$ given by the following cyclic rotation of vertices and arrows:
\begin{align*}
 &&
 (1\ 3) ,
 &&
 (2\ 4) ,
 &&
 (\alpha\ \nu) ,
 &&
 (\beta\ \delta) ,
 &&
 (\gamma\ \varrho) ,
 &&
 (\xi\ \eta).
 &&
\end{align*}

The following theorem was proved in \cite[Theorem 6.10]{ES4}.

\begin{thm}\label{D}
Let $\Lambda=\Lambda(Q, f, m_{\bullet}, c_{\bullet})$ be a weighted surface algebra. If $\Lambda$ is not isomorphic to
$\Lambda(\lambda)$, $T(\lambda)$, $S(\lambda)$, $D(\lambda)$, $D(\lambda)^{(1)}$, $D(\lambda)^{(2)}$, for any $\lambda \in K^*$, then there exists a quotient algebra $\Gamma$ of $\Lambda$ which is a string algebra of non-polynomial growth. In particular, $\Lambda$ is tame of non-polynomial growth.
\end{thm}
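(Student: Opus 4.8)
The statement splits into a representation-type part and the construction of an auxiliary algebra, and the plan is to reduce everything to producing one quotient $\Gamma$. Tameness of $\Lambda$ is already available: as recalled just after Theorem~\ref{C}, every weighted surface algebra degenerates to a biserial weighted surface algebra, which is special biserial and hence tame by \cite{WW}, so $\Lambda$ is tame by the semicontinuity theorem of \cite{Ge}. Thus Theorem~\ref{D} will follow once I exhibit a quotient algebra $\Gamma$ of $\Lambda$ that is a string algebra of non-polynomial growth. A quotient suffices for the growth assertion: if $\Gamma=\Lambda/J$, then $\mod(\Gamma)$ is the full subcategory of $\mod(\Lambda)$ of modules annihilated by $J$, so every one-parameter family of indecomposable $\Gamma$-modules is also one of $\Lambda$-modules, and the number $\mu_\Gamma(d)$ of such families in dimension $d$ satisfies $\mu_\Gamma(d)\le\mu_\Lambda(d)$. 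Hence non-polynomial growth of $\Gamma$ forces non-polynomial growth of $\Lambda$, and together with tameness this gives the final assertion.

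To construct $\Gamma$ I would monomialise the commutativity relations~$(i)$. Recall that the Gabriel quiver $Q_\Lambda$ is obtained from the $2$-regular quiver $Q$ by deleting the virtual arrows, so each of its vertices has at most two incoming and two outgoing arrows. Factoring $\Lambda$ by the ideal generated by the $g$-paths $A_{\bar\alpha}$ of length at least two turns each relation $\alpha f(\alpha)-c_{\bar\alpha}A_{\bar\alpha}$ into the zero relation $\alpha f(\alpha)=0$, while $(ii)$ and $(iii)$ survive as monomial relations. After this, for every arrow only the $g$-successor composite can be nonzero, so at most one length-two path continues on each side; these are exactly the defining conditions of a string algebra. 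The bookkeeping at the degenerate $g$-orbits with $m_\alpha n_\alpha=2$ (the virtual configurations already singled out in $(ii)$--$(iii)$) must be carried out locally by hand, and this is where one confirms that $\Gamma$ is a genuine string algebra rather than merely special biserial.

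The essential and hardest step is to prove that $\Gamma$ has non-polynomial growth. For a string algebra the growth type is governed by its bands, and the algebra fails to be of polynomial growth precisely when its band combinatorics is sufficiently rich, so I would apply this criterion directly (as in the band analysis of string algebras, cf. \cite{Sch}). Under the standing hypothesis that $\Lambda$ is none of $\Lambda(\lambda),T(\lambda),S(\lambda),D(\lambda),D(\lambda)^{(1)},D(\lambda)^{(2)}$, the directed triangulation $(S,\vec T)$ is forced to be large enough that $Q_\Lambda$ contains two independent bands which can be freely concatenated, yielding infinitely many one-parameter families in unbounded dimension. The six excluded families are exactly the exceptional small triangulations of the sphere and the disc --- the polynomial-growth algebras of Theorem~\ref{C}(3) together with the two atomic disc families $D(\lambda)^{(1)}$ and $D(\lambda)^{(2)}$ that serve as the building blocks for the general case --- and setting them aside leaves precisely the surfaces carrying the required band configuration. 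Establishing this, by a case analysis organised according to the combinatorics of $(S,\vec T)$ and checking in each surviving case that the bands produce non-polynomial growth, is the main obstacle; everything else is formal. Once it is done, $\Gamma$ is a string algebra of non-polynomial growth, and by the first paragraph $\Lambda$ is tame of non-polynomial growth.
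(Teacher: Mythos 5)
Your outer shell is fine and matches the paper's framing: a quotient algebra inherits the lower bound on growth (the paper's Lemma~\ref{00}(b) is the functorial version of your $\mu_\Gamma(d)\le\mu_\Lambda(d)$ argument), tameness of $\Lambda$ comes from the degeneration to biserial weighted surface algebras plus \cite{Ge}, and your candidate $\Gamma$ --- kill the $g$-cycles $A_{\alpha}$ so that every relation $\alpha f(\alpha)-c_{\bar\alpha}A_{\bar\alpha}$ becomes monomial --- is exactly the algebra $\Gamma=KQ/L(Q,f,m_{\bullet},c_{\bullet})$ that the paper recalls. But note that the paper does not prove Theorem~\ref{D} internally at all: it quotes it from \cite{ES4}*{Theorem 6.10}, and the entire mathematical content --- that $\Gamma$ is a string algebra, and crucially that it has non-polynomial growth whenever $\Lambda$ avoids the six exceptional families --- lives in Section~6 of that reference. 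Your proposal supplies only the formal reductions and leaves both substantive points unproven: the string-algebra verification at virtual arrows is deferred to ``bookkeeping by hand'' (and your choice to kill only the $A_{\bar\alpha}$ of length at least two, rather than $A_\alpha$ for \emph{all} arrows as in \cite{ES4}, makes it unclear that virtual arrows even vanish in your quotient), while the growth statement is explicitly conceded as ``the main obstacle''. Asserting that the excluded families ``are exactly'' the triangulations lacking a suitable pair of bands is a restatement of what must be shown, not an argument: the actual proof requires exhibiting, for every triangulation quiver outside the six families, primitive walks $u,w$ in the bound quiver of $\Gamma$ such that $uw$ and $wu$ are again primitive (the criterion of \cite{Sk0} that the paper itself invokes in Example~\ref{ex1}), and that case analysis is precisely what is missing.

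A further point shows the gap is not merely one of labour: your description of the exceptional families is wrong in a way that matters. The algebras $D(\lambda)^{(1)}$, $D(\lambda)^{(2)}$ (all $\lambda\in K^*$) and the singular algebras $D(1),\Lambda(1),T(1),S(1)$ are themselves of \emph{non-polynomial} growth --- this is exactly why cases (2) and (3) of the paper's proof of Theorem~\ref{wsa} need separate arguments. These families are excluded from Theorem~\ref{D} not because their surfaces are ``too small to carry bands'' in the sense of your heuristic, but because for them the maximal string quotient $\Gamma$ happens to be of domestic type, so this particular method of detecting non-polynomial growth fails even though the conclusion about $\Lambda$ itself may still hold. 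A correct proof must identify precisely where the quotient construction breaks down, which your characterization (``the polynomial-growth algebras of Theorem~\ref{C}(3) together with two atomic disc families'') does not do.
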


We recall from \cite[Section 6]{ES4} the construction of the quotient algebra $\Gamma$ from the above theorem. Namely, we have
$$\Gamma=\Gamma(Q,f,m_{\bullet},c_{\bullet}) = KQ/L(Q,f,m_{\bullet},c_{\bullet})$$ where $L(Q,f,m_{\bullet},c_{\bullet})$
is the ideal in the path algebra $K Q$ of $Q$ 
generated by the elements $\alpha f(\alpha)$ and $A_{\alpha}$,
for all arrows $\alpha \in Q_1$. Note that $\Gamma$ has a presentation $KQ_{\Gamma}/I_{\Gamma}$, where $Q_{\Gamma}$ is the Gabriel quiver of $\Gamma$ and $I_{\Gamma}=L(Q,f,m_{\bullet},c_{\bullet}) \cap KQ_{\Gamma}$ is the ideal generated by $\alpha f(\alpha)$ and $A_{\alpha}$ for non-virtual arrows $\alpha\in Q_1$. Then it is easy to see that $\Gamma$ is a string algebra, and this is in fact the largest string quotient
algebra of $\Lambda$ with respect to the dimension. One can check that in case of $D(\lambda)$, $\Lambda(\lambda)$, $T(\lambda)$, $S(\lambda)$, $D(\lambda)^{(1)}$ and $D(\lambda)^{(2)}$ that  such a quotient algebra is tame of domestic type, for any $\lambda\in K^*$. Nevertheless, it was shown using different arguments that $D(1)$, $\Lambda(1)$, $T(1)$, $S(1)$ as well as $D(\lambda)^{(1)}$, $D(\lambda)^{(2)}$ for any $\lambda \in K^*$, are of non-polynomial growth and hence Theorem \ref{C}(3) holds.

Recall that selfinjective algebras $A$ and $B$ are said to be \emph{socle equivalent} if the quotient
algebras $A/\soc(A)$ and $B/\soc(B)$ are isomorphic, where $\soc$ denotes the socle of an algebra. Socle equivalences of weighted surface algebras were studied in \cite{BEHSY}. In particular, it follows by \cite{BEHSY}*{Theorems 1.1 and 1.2} that if a symmetric algebra $A$ with Grothendieck group $K_0(A)$ of rank
at least $2$ is socle equivalent but not isomorphic to a weighted surface algebra $\Lambda(Q, f, m_{\bullet}, c_{\bullet})$, then the characteristic of the filed $K$ is $2$ and $(Q,f)$ is a triangulation quiver associated to a surface $S$ with non-empty boundary $\partial S$. The boundary $\partial S$ determines the \emph{border} $\partial(Q,f)$ of $(Q,f)$. This is the set of all vertices for which there is a loop $\alpha$ such that $f(\alpha)=\alpha$. These loops are called \emph{border loops}. Hence we can define a function $b_{\bullet}: \partial(Q,f) \rightarrow K$ called the \emph{border function}. Further, it follows by \cite{BEHSY}*{Theorem 1.2} that if  $A$ is socle equivalent but not isomorphic to the  weighted surface algebra $\Lambda(Q, f, m_{\bullet}, c_{\bullet})=KQ/I$, then $A$ is isomorphic to a \emph{socle deformed weighted surface algebra} $\Lambda(Q, f, m_{\bullet}, c_{\bullet}, b_{\bullet})=KQ/I'$, where $b_{\bullet}\neq 0$ and the ideal $I' = I'(Q, f,m_{\bullet}, c_{\bullet}, b_{\bullet})$ of the path algebra $KQ$ is generated by:
\begin{enumerate}
 \item[(i)]
	$\alpha f(\alpha) - c_{\ba}A_{\ba}$ for all arrows $\alpha$ of $Q$, which are not border loops,
 \item[(i')]
    $\alpha f(\alpha) - c_{\ba}A_{\ba}- b_{\ba}B_{\ba}$, for all border loops $\alpha$ in $Q$, where $b_{\alpha}$ is a value of $b_{\bullet}$ on the starting vertex of $\alpha$ and $B_{\alpha}:=A_{\alpha}g^{-1}(\alpha)=\alpha g(\alpha)\ldots g^{m_{\alpha}n_{\alpha}-1}(\alpha)$ is a path along $g$-orbit of length $m_{\alpha}n_{\alpha}$,
 \end{enumerate}
and relations of type (ii) and (iii) from the definition of weighted surface algebras. We refer to \cite[Section 2]{BEHSY} for more details. Let us only mention that paths $B_{\alpha}$ are generators of the socle of a weighted surface algebra $\Lambda(Q, f, m_{\bullet}, c_{\bullet})$. Observe that if a triangulation quiver $(Q,f)$ has non-empty border but the border function $b_{\bullet}=0$, then the socle deformed weighted surface algebra $\Lambda(Q, f, m_{\bullet}, c_{\bullet}, b_{\bullet})$ is by definition equal to weighted surface algebra $\Lambda(Q, f, m_{\bullet}, c_{\bullet})$. 


The following fact is used in the proof of Corollary \ref{soc}.

\begin{prop} \label{E}
Let $A$ be a symmetric algebra with Grothendieck group $K_0(A)$ of rank
at least $2$ which is socle equivalent but not isomorphic to a weighted surface algebra $\Lambda$. If $\Lambda$ is not isomorphic to any of algebras $D(\lambda)$, $D(\lambda)^{(1)}$, $D(\lambda)^{(2)}$, $\lambda \in K^*$, then there is a string quotient algebra $\Gamma$ of $A$ which is of non-polynomial growth.
\end{prop}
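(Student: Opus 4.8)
The plan is to reduce the deformed situation to the non-deformed one handled by Theorem \ref{D}, using that the string quotient constructed there depends only on $(Q,f,m_\bullet,c_\bullet)$ and not on the socle-deformation data. First I would invoke the discussion preceding the proposition, based on \cite{BEHSY}*{Theorems 1.1 and 1.2}: since $A$ is symmetric with Grothendieck group of rank at least $2$ and is socle equivalent but not isomorphic to $\Lambda = \Lambda(Q,f,m_\bullet,c_\bullet)$, the characteristic of $K$ is $2$, the border $\partial(Q,f)$ is non-empty, and $A \cong \Lambda(Q,f,m_\bullet,c_\bullet,b_\bullet) = KQ/I'$ is a socle deformed weighted surface algebra with $b_\bullet \neq 0$. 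Here $I'$ is generated by the relations (i) for non-border-loop arrows, the deformed relations (i') $\alpha f(\alpha) - c_{\ba}A_{\ba} - b_{\ba}B_{\ba}$ for border loops, and the same monomial relations (ii) and (iii) as in the ideal $I$ defining $\Lambda$.

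Next I would check that $\Lambda$ is none of the six algebras excluded in Theorem \ref{D}. By hypothesis $\Lambda$ is not isomorphic to $D(\lambda)$, $D(\lambda)^{(1)}$ or $D(\lambda)^{(2)}$. The remaining three, $\Lambda(\lambda)$, $T(\lambda)$ and $S(\lambda)$, are associated to the sphere, whose triangulation quivers have empty border and hence no border loops; for such quivers any socle deformation is trivial, so $\Lambda(Q,f,m_\bullet,c_\bullet,b_\bullet)$ would coincide with $\Lambda$, contradicting $A \not\cong \Lambda$. Thus $\Lambda$ is not isomorphic to $\Lambda(\lambda)$, $T(\lambda)$ or $S(\lambda)$ either, and Theorem \ref{D} applies to $\Lambda$, furnishing the string algebra $\Gamma = KQ/L$ of non-polynomial growth, where $L = L(Q,f,m_\bullet,c_\bullet)$ is generated by $\alpha f(\alpha)$ and $A_\alpha$ for all arrows $\alpha$.

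It then remains to verify that this same $\Gamma$, which depends only on $(Q,f,m_\bullet,c_\bullet)$ and not on $b_\bullet$, is a quotient of $A$, i.e. that $I' \subseteq L$. The generators of $I'$ of types (i) (non-border), (ii) and (iii) already lie in $I$, and $I \subseteq L$ because $\Gamma$ is a quotient of $\Lambda$. For a deformed relation (i') the only extra term is $b_{\ba}B_{\ba}$; since $B_{\ba} = A_{\ba}g^{-1}(\ba)$ is obtained from $A_{\ba}$ by appending one arrow along the $g$-orbit and $L$ is a two-sided ideal with $A_{\ba} \in L$, we get $B_{\ba} \in L$, whence (i') lies in $L$ as well. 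Therefore $I' \subseteq L$, so $\Gamma$ is a string quotient of $A$, and it is of non-polynomial growth by Theorem \ref{D}.

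The step demanding the most care is the inclusion $I' \subseteq L$: the whole argument hinges on the observation that $B_\alpha$ is killed as soon as $A_\alpha$ is, so that passing to the string quotient erases the difference between $A$ and $\Lambda$. The border argument ruling out the sphere algebras $\Lambda(\lambda)$, $T(\lambda)$, $S(\lambda)$ is the other point requiring attention, since it is exactly what allows the exceptional list of Theorem \ref{D} to shrink to the three disc algebras named in the statement.
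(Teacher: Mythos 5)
Your proposal follows the paper's proof in all essentials: the same reduction via the socle--deformation classification of \cite{BEHSY}, the same strategy of shrinking the exceptional list of Theorem \ref{D} to the three disc algebras, the same application of Theorem \ref{D}, and the same verification that $I' \subseteq L$. That last verification you actually spell out in more detail than the paper (which only says it is easy to see): your observation that $B_{\ba} = A_{\ba}g^{-1}(\ba)$ lies in $L$ because $A_{\ba} \in L$ and $L$ is a two-sided ideal is exactly the right reason, and it is the point that makes the string quotient insensitive to the deformation term $b_{\ba}B_{\ba}$.

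The one step where your argument has a gap is the exclusion of $\Lambda(\lambda)$, $T(\lambda)$, $S(\lambda)$. You argue that these algebras are associated to the sphere, that their triangulation quivers have empty border, and that ``for such quivers any socle deformation is trivial, so $\Lambda(Q,f,m_{\bullet},c_{\bullet},b_{\bullet})$ would coincide with $\Lambda$.'' But $\Lambda(Q,f,m_{\bullet},c_{\bullet},b_{\bullet})$ is built from the \emph{given} presentation $(Q,f)$ of $\Lambda$, and your first paragraph has already established that $\partial(Q,f)$ is non-empty, so the deformation over $(Q,f)$ is certainly not trivial; to run your argument you would need to know that the abstract isomorphism $\Lambda \cong \Lambda(\lambda)$ (or $T(\lambda)$, $S(\lambda)$) forces $(Q,f)$ itself to be a sphere quiver, i.e.\ that the triangulation-quiver presentation is an isomorphism invariant --- which is nowhere established in the paper and is not obvious. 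The paper's proof sidesteps this: it notes that $A$ is also socle equivalent to $\Lambda'$ (the exceptional algebra equipped with its own sphere presentation) and applies \cite{BEHSY}*{Theorem 1.2} to the pair $(A,\Lambda')$; since the sphere has empty boundary, the ``socle equivalent but not isomorphic'' alternative of that theorem is impossible, hence $A \cong \Lambda' \cong \Lambda$, contradicting the hypothesis $A \not\cong \Lambda$. Your underlying idea --- empty boundary kills the deformation --- is the right one, but it must be applied to the sphere presentation of $\Lambda'$ rather than to $(Q,f)$.
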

\begin{proof}
Assume that $A$ is socle equivalent but not isomorphic to a weighted surface algebra $\Lambda=\Lambda(Q, f, m_{\bullet}, c_{\bullet})$. 
By the above considerations, $\Lambda$ has non-empty border $\partial(Q,f)$ and $A\cong \Lambda(Q, f, m_{\bullet}, c_{\bullet}, b_{\bullet})$ for some non-zero border function $b_{\bullet}$ on $\partial(Q,f)$. 

Assume that $\Lambda$ is isomorphic to some algebra $\Lambda'$ from exceptional families $\Lambda(\lambda)$, $S(\lambda)$, $T(\lambda)$, $\lambda \in K^*$. Then $A$ is also socle equivalent to $\Lambda'$. Since $\Lambda'$ is associated to the surface without boundary (a sphere $S^2$ in $\mathbb{R}^3$), we get by \cite[Theorem 1.2]{BEHSY} that $A \cong \Lambda' \cong \Lambda$, a contradiction. Therefore $\Lambda$ is not isomorphic to any of the algebras $\Lambda(\lambda)$, $S(\lambda)$, $T(\lambda)$, for $\lambda \in K^*$. By assumption, $\Lambda$ is also not isomorphic to any of algebras $D(\lambda)$, $D(\lambda)^{(1)}$, $D(\lambda)^{(2)}$, $\lambda \in K^*$. Then we conclude by Theorem \ref{D} that $\Lambda$ admits a quotient algebra $\Gamma$ which is a string algebra of non-polynomial growth such that $\Gamma=KQ/L$ and $L$ is an ideal of $KQ$ generated by $\alpha f(\alpha)$ and $A_{\alpha}$, for $\alpha \in Q$. Assume now that $A\cong \Lambda(Q, f, m_{\bullet}, c_{\bullet}, b_{\bullet})=KQ/I'$ where $I'$ is as in the definition of the socle deformed weighted surface algebra. It is easy to see that $I'$ is contained in $L$, thus $\Gamma=KQ/L$ is also a quotient algebra of $A=KQ/I'$.
\end{proof}


In \cite{ES3B} Erdmann and Skowro\'nski showed that the class of indecomposable idempotent algebras of biserial weighted surface algebras coincides with the class of Brauer graph algebras (see e.g. \cite{Rog}). This naturally led to the question about the structure of idempotent algebras of weighted surface algebras and resulted in the introduction of a new class, called the \emph{hybrid algebras} \cite{ES6}. In order to present this class we need to generalize the definition of virtual arrows.

Let $\mathcal{T}$ be a fixed (possibly empty) set of triangles in a biserial quiver $(Q,f)$. An arrow $\alpha$ is then called \emph{virtual} if one of the following holds: $m_{\alpha}n_{\alpha}=1$ and $\alpha, \ba \notin \mathcal{T}$, or $m_{\alpha}n_{\alpha}=2$ and $\ba \in \mathcal{T}$ (see \cite[Definition 3.1]{ES6}). Note that virtual arrows for weighted surface algebras are also virtual in this sense, because they satisfy the second condition.

The \textit {hybrid algebra} $H_{\mathcal{T}}=H_{\mathcal{T}}(Q, f, m_{\bullet}, c_{\bullet})$ associated to $\mathcal{T}$ is the quotient algebra
$H_{\mathcal{T}}=KQ/I$ where $(Q,f)$ satisfies the condition:
\begin{enumerate}[($\ast$)]
\item $m_{\alpha}n_{\alpha}\geq 2$ for any arrow $\alpha$ and $m_{\alpha}n_{\alpha}=1$ is allowed when $\alpha, \ba \notin \mathcal{T}$,
\end{enumerate}
and $I$ is generated by the following elements:
\begin{enumerate}[(i)]
\item $\alpha f(\alpha)-c_{\ba}A_{\ba}$ for $\alpha\in \mathcal{T}$ and $\alpha f(\alpha)$ for $\alpha \notin
    \mathcal{T}$,
\item $\alpha f(\alpha)g(f(\alpha))$ unless $\alpha, \ba \in \mathcal{T}$ and $\ba$ is either virtual or $f(\ba)$ is virtual and $m_{\ba}=1$, $n_{\ba}=3$,
\item $\alpha g(\alpha)f(g(\alpha))$ unless $\alpha, g(\alpha) \in \mathcal{T}$, and $f(\alpha)$ is either virtual or $f^2(\alpha)$ is virtual and $m_{f(\alpha)}=1, n_{f(\alpha)}=3$,
\item $c_{\alpha}B_{\alpha} - c_{\ba}B_{\ba}$ for any arrow $\alpha \in Q$,
\item $B_{\alpha}\alpha \in I$ and $\alpha B_{g(\alpha)}\in I$, for any arrow $\alpha$, if all arrows of $Q$ are virtual.
\end{enumerate}

Let us mention that a few algebras have intentionally been excluded from the class of hybrid algebras due to the fact that they are not symmetric (see \cite{ES6}*{Assumption 3.4}).

Observe that if $\mathcal{T}=\emptyset$ then the algebra $H_{\mathcal{T}}$ is special biserial and symmetric, that is, a Brauer graph algebra. On the other hand, if $\mathcal{T}=Q_1$ then $H_{\mathcal{T}}$ is a weighted surface algebra provided $Q_0$ has at least two vertices \cite{ES4}. Moreover,  the Gabriel quiver $Q_H$ of a hybrid algebra $H$ is obtained from $Q$ by removing the virtual arrows, except when $H$ is local with two virtual loops \cite[Lemma 3.6]{ES6}.

We have the following characterization of hybrid algebras \cite[Theorem 1.1]{ES6}.


\begin{thm} The following assertions hold.
\begin{itemize}
\item[$(1)$] Assume $\Lambda$ is a weighted surface algebra and $e$ is an idempotent of $\Lambda$. Then every block component of $e\Lambda e$ is a hybrid algebra, up to the socle equivalence.
\item[$(2)$] Assume $H$ is a hybrid algebra. Then $H$ is isomorphic to a block component of $e \Lambda e$ for some weighted surface algebra $\Lambda$ and an idempotent $e$ of $\Lambda$.
\end{itemize}
\end{thm}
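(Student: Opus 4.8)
The plan is to prove both inclusions by analysing the single operation of \emph{deleting one vertex} and then iterating. The basic observation is that if $e=\sum_{i\in S}e_i$ is an idempotent of a basic algebra, then $e\Lambda e$ is obtained from $\Lambda$ by successively removing the vertices $i\notin S$ one at a time; so it suffices to understand $e'\Lambda e'$ for $e'=1-e_v$ and then induct, provided the class of blocks of hybrid algebras is closed under this operation. Since a weighted surface algebra is precisely the hybrid algebra with $\mathcal{T}=Q_1$ when $|Q_0|\geq 2$ (as recalled in the excerpt), part $(1)$ reduces to the statement that deleting one vertex from a hybrid algebra produces a hybrid algebra, up to socle equivalence.

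First I would set up the combinatorics of vertex deletion on a biserial quiver $(Q,f)$. At the deleted vertex $v$ there are exactly two incoming and two outgoing arrows, and the permutation $f$, together with $g(\alpha)=\overline{f(\alpha)}$, prescribes which length-two paths through $v$ survive in $\mathrm{rad}(e'\Lambda e')/\mathrm{rad}^2(e'\Lambda e')$ and hence become the new arrows of the Gabriel quiver of $e'\Lambda e'$. I would check that the resulting quiver is again biserial, define the induced permutation $f'$ on its arrows (compositions along $f$ collapse the triangle through $v$ into shorter $f'$-orbits), and compute how $m_\bullet$ and $c_\bullet$ transfer. The heart of the matter is to verify that the defining relations $\mathrm{(i)}$--$\mathrm{(v)}$ of $\Lambda$ restrict to exactly the relations of a hybrid algebra $H_{\mathcal{T}'}(Q',f',m'_\bullet,c'_\bullet)$ for a suitable triangle set $\mathcal{T}'$: a relation of type $\mathrm{(i)}$, namely $\alpha f(\alpha)-c_{\bar{\alpha}}A_{\bar{\alpha}}$, either survives verbatim (when $\alpha$ stays in $\mathcal{T}'$) or degenerates to the monomial relation $\alpha f(\alpha)$ (when the triangle containing $\alpha$ is broken by the deletion, pushing $\alpha$ out of $\mathcal{T}'$), which is exactly the dichotomy in hybrid relation $\mathrm{(i)}$. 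The socle-generating paths $B_\alpha$ and the socle relations $\mathrm{(iv)},\mathrm{(v)}$ must be tracked through the same process, and this is where a socle deformation can be forced; I expect the honest bookkeeping here --- matching every case of the relation list, handling virtual arrows and the exceptional local case --- to be the main obstacle.

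For the converse $(2)$ I would reverse this construction. Given a hybrid algebra $H=H_{\mathcal{T}}(Q,f,m_\bullet,c_\bullet)$ with $\mathcal{T}\subsetneq Q_1$, the arrows outside $\mathcal{T}$ mark exactly the places where $(Q,f)$ fails to be a full triangulation quiver. I would \emph{expand} each such defect by inserting new vertices and arrows so as to complete the $f$-orbits into genuine triangles, thereby building a triangulation quiver $(\tilde Q,\tilde f)$; assigning weights and parameters so that the long paths $A_{\bar{\alpha}}$ and the socle paths $B_\alpha$ of the enlarged algebra match those prescribed in $H$ yields a weighted surface algebra $\tilde\Lambda=\Lambda(\tilde Q,\tilde f,\tilde m_\bullet,\tilde c_\bullet)$. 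Taking $e$ to be the sum of the idempotents at the original vertices $Q_0\subseteq\tilde Q_0$, the single-vertex analysis from part $(1)$ shows that $e\tilde\Lambda e$ recovers $H$ on its indecomposable block, which gives $(2)$.

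Throughout I would use that passage to block components is harmless: $e\Lambda e$ need not be connected, and the assertion concerns each indecomposable block separately, so the vertex-by-vertex analysis applies componentwise. The socle-equivalence clause in $(1)$ is unavoidable precisely because, in characteristic $2$ and in the presence of border loops, the restricted relations land in a socle-deformed presentation rather than the standard one; identifying this case with the socle-deformation data $b_\bullet$ recalled above is what upgrades \emph{isomorphic} to \emph{socle equivalent}.
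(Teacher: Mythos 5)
This theorem is not proved in the paper at all: it is quoted verbatim from Erdmann--Skowro\'nski, \emph{Hybrid algebras} \cite{ES6}*{Theorem 1.1}, where its proof occupies a substantial part of a long paper. So the comparison here is between your outline and that proof. Your outline does point in the right general direction --- part $(2)$ in particular matches the actual strategy (enlarge $(Q,f)$ to a triangulation quiver by inserting new vertices so that the newly created arrows are virtual, then cut back down with the idempotent supported on the original vertices) --- but as it stands it is a plan rather than a proof, and the places where you defer the work are exactly where the content of the theorem lies. You say yourself that matching the relation lists, handling virtual arrows and the exceptional local case is ``the main obstacle''; that obstacle is the theorem. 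Concretely, for one-vertex deletion you never determine which of the four compositions $\alpha_i\beta_j$ through the deleted vertex become arrows of the new Gabriel quiver (the correct answer is governed by the $g$-orbits, i.e.\ the new arrows are the contractions $\alpha\, g(\alpha)$, not the $f$-compositions), nor why the new quiver is again $2$-regular, nor what $f'$ is when the deleted vertex carries loops or virtual arrows, nor what happens when only one vertex remains --- precisely the cases forcing \cite{ES6}'s Assumption 3.4 and the socle deformations in characteristic $2$.

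There is also a structural gap in your induction for part $(1)$. You reduce to the claim that deleting one vertex from a \emph{hybrid algebra} yields a hybrid algebra up to socle equivalence. But after the first deletion you no longer have a hybrid algebra, only an algebra socle equivalent to one, so the inductive hypothesis does not apply to it; the class you actually need to be closed under vertex deletion is ``socle-deformed hybrid algebras,'' a strictly larger class, and closure for that class is a strictly stronger statement than the one you set out to prove. (One can avoid the compounding by analysing $e\Lambda e$ for the original weighted surface algebra directly, which is essentially what \cite{ES6} does, but that abandons the one-vertex-at-a-time reduction on which your whole argument rests.) Similarly, in part $(2)$ the assertion that weights and parameters on the enlarged quiver ``can be assigned so that the paths match'' needs an actual verification that the new $g$-orbits can be given weight making all inserted arrows virtual while reproducing the prescribed $A_{\bar{\alpha}}$, $B_\alpha$ and the parameter function of $H$ --- this is where the hybrid relations (iv), (v) and the excluded non-symmetric cases enter, and none of it is checked.
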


As a consequence of the above theorem, we get that hybrid algebras are of tame (or finite) representation type and are symmetric, being idempotent algebras of tame symmetric algebras \cite[Lemma 5.7]{ES6}. However, they do not need to be periodic \cite[Section 6]{ES6} (see also Example \ref{ex2}).

\section{Krull-Gabriel dimension of hybrid algebras with triangulation quiver}

As mentioned above, a weighted surface algebra is a hybrid algebra $H_{\mathcal{T}}(Q, f, m_{\bullet}, c_{\bullet})$ such that $\mathcal{T}=Q_1$. In this case the biserial quiver $(Q,f)$ is a triangulation quiver. We now consider a class of non-local hybrid algebras $H_{\mathcal{T}}(Q, f, m_{\bullet}, c_{\bullet})$ for which the underlying quiver $(Q,f)$ is a triangulation quiver, and determine their Krull-Gabriel dimension. In particular, we shall see that the dimension does not depend on the choice of $\mathcal{T}$. Observe that if $(Q,f)$ is a triangulation quiver, then we may choose $Q_1$ as the set of triangles $\mathcal{T}$. 

\begin{prop} \label{hybrid}
Let $H_{\mathcal{T}}=H_{\mathcal{T}}(Q, f, m_{\bullet}, c_{\bullet})$ be a hybrid algebra such that $(Q,f)$ is a triangulation quiver with $|Q_0| \geq 2$ and $m_{\alpha}n_{\alpha} \geq 2$, for any arrow $\alpha$. If a weighted surface algebra $H_{Q_1}=H_{Q_1}(Q, f, m_{\bullet}, c_{\bullet})$ is not isomorphic to any of 
algebras $\Lambda(\lambda)$, $T(\lambda)$, $S(\lambda)$, $D(\lambda)$, $D(\lambda)^{(1)}$, $D(\lambda)^{(2)}$, $\lambda \in K^*$,
then $H_{\mathcal{T}}$ is of non-polynomial growth and $\KG(H_{\mathcal{T}})=\infty$.
\end{prop}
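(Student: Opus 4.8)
The plan is to realise the non-polynomial growth string algebra furnished by Theorem \ref{D} as a common quotient of the weighted surface algebra $H_{Q_1}$ and of $H_\mathcal{T}$, and then to transport the value $\KG=\infty$ from that string algebra up to $H_\mathcal{T}$ by means of Lemma \ref{00}(b).

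First I would apply Theorem \ref{D} to the weighted surface algebra $\Lambda=H_{Q_1}=H_{Q_1}(Q,f,m_\bullet,c_\bullet)$. Since by hypothesis $\Lambda$ is not isomorphic to any of the excluded algebras $\Lambda(\lambda)$, $T(\lambda)$, $S(\lambda)$, $D(\lambda)$, $D(\lambda)^{(1)}$, $D(\lambda)^{(2)}$, the theorem produces a string algebra $\Gamma=KQ/L$ of non-polynomial growth, where $L=L(Q,f,m_\bullet,c_\bullet)$ is the ideal of $KQ$ generated by the elements $\alpha f(\alpha)$ and $A_\alpha$ for all arrows $\alpha\in Q_1$. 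Crucially, $\Gamma$ is built on the same quiver $Q$ underlying $H_\mathcal{T}=KQ/I$.

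The central step is to check that $I\subseteq L$, so that $\Gamma$ is a quotient algebra of $H_\mathcal{T}$. Because $L$ is an ideal containing every $\alpha f(\alpha)$ and every $A_\alpha$, this reduces to inspecting the generators (i)--(v) of $I$. A relation of type (i) is either $\alpha f(\alpha)$ (for $\alpha\notin\mathcal{T}$), which is a generator of $L$, or $\alpha f(\alpha)-c_{\ba}A_{\ba}$ (for $\alpha\in\mathcal{T}$), a difference of two generators of $L$. A relation of type (ii) is $\alpha f(\alpha)\cdot g(f(\alpha))$, a right multiple of the generator $\alpha f(\alpha)$, while a relation of type (iii) is $\alpha\cdot\big(g(\alpha)f(g(\alpha))\big)$, a left multiple of the generator $g(\alpha)f(g(\alpha))$; both therefore lie in $L$. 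Finally, since $B_\alpha=A_\alpha\,g^{m_\alpha n_\alpha-1}(\alpha)$ is a right multiple of $A_\alpha$, every $B_\alpha$ lies in $L$, whence the relations of type (iv), $c_\alpha B_\alpha-c_{\ba}B_{\ba}$, and of type (v), $B_\alpha\alpha$ and $\alpha B_{g(\alpha)}$, also lie in $L$. Granting $I\subseteq L$, two conclusions follow. As $\Gamma$ is of non-polynomial growth, is a quotient of $H_\mathcal{T}$, and $H_\mathcal{T}$ is tame (being a hybrid algebra, hence an idempotent algebra of a tame symmetric algebra), the fact that polynomial growth is inherited by quotient algebras forces $H_\mathcal{T}$ to be of non-polynomial growth. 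Moreover $\Gamma$ is a non-domestic string algebra (being of non-polynomial growth), so $\KG(\Gamma)=\infty$ by Theorem \ref{00000}(\ref{Sch}); and since $\Gamma$ is a quotient algebra of $H_\mathcal{T}$, Lemma \ref{00}(b) gives $\KG(\Gamma)\leq\KG(H_\mathcal{T})$. Hence $\KG(H_\mathcal{T})=\infty$.

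I expect the only real obstacle to be the inclusion $I\subseteq L$: although each individual verification collapses to the remark that $L$ is an ideal generated by the $\alpha f(\alpha)$ and the $A_\alpha$, some care is needed with the type (i) relations on $\mathcal{T}$ (where one must recognise $A_{\ba}$ as a generator of $L$) and with the degenerate configuration ``all arrows of $Q$ virtual'' governing the type (v) relations, where one should confirm that, under $|Q_0|\ge 2$ and the exclusion of the exceptional families, this case is either vacuous or harmless, so that $\Gamma$ is indeed a genuine non-polynomial growth string algebra.
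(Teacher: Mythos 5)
Your proposal is correct and follows essentially the same route as the paper's proof: apply Theorem \ref{D} to $H_{Q_1}$, observe that the hybrid ideal $I_{\mathcal{T}}$ is contained in the string ideal $L$ so that $\Gamma=KQ/L$ is a quotient of $H_{\mathcal{T}}$ for every choice of $\mathcal{T}$, deduce non-polynomial growth from tameness of hybrid algebras, and conclude via Lemma \ref{00}(b) and Theorem \ref{00000}(\ref{Sch}). Your generator-by-generator verification of $I_{\mathcal{T}}\subseteq L$ (including the type (iv)--(v) relations via $B_\alpha=A_\alpha g^{-1}(\alpha)$) simply spells out what the paper states as an observation, and the ``all arrows virtual'' case you flag is indeed harmless since those relations also lie in $L$.
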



\begin{proof}
If $H_{\mathcal{T}}=H_{\mathcal{T}}(Q, f, m_{\bullet}, c_{\bullet})$ is a hybrid algebra, then $H_{\mathcal{T}}=KQ/I_{\mathcal{T}}$ and $I_{\mathcal{T}}$ satisfies conditions (i)-(v) from the definition of a hybrid algebra. Since we assume that $|Q_0| \geq 2$ and $m_{\alpha}n_{\alpha} \geq 2$, for any arrow $\alpha$, we conclude from the definition of a weighted surface algebra that $H_{Q_1}=H_{Q_1}(Q, f, m_{\bullet}, c_{\bullet})$ is such an algebra with the same triangulation quiver $(Q,f)$, weight function $m_{\bullet}$ and parameter function $c_{\bullet}$ as in $H_{\mathcal{T}}$.
Further, if  $H_{Q_1}$ is not isomorphic to $\Lambda(\lambda)$, $T(\lambda)$, $S(\lambda)$, $D(\lambda)$, $D(\lambda)^{(1)}$, $D(\lambda)^{(2)}$, then by Theorem \ref{D}, there is a quotient algebra $\Gamma$ of $H_{Q_1}$ which is a string algebra of non-polynomial growth. Recall that $\Gamma=KQ/L$, where $L$ is an ideal of the path algebra $K Q$ of $Q$
generated by the elements $\alpha f(\alpha)$ and $A_{\alpha}$, for all arrows $\alpha \in Q_1$. Observe that the ideal $I_{\mathcal{T}}$ is contained in $L$ and thus $\Gamma=KQ/L$ is also a quotient of $H_{\mathcal{T}}=KQ/I_{\mathcal{T}}$, for any set of triangles ${\mathcal{T}}$. Since hybrid algebras are not wild by \cite{ES6}*{Lemma 5.7}, we conclude that $H_{\mathcal{T}}$ is tame of non-polynomial growth. Moreover, we get $\KG (\Gamma)\leq\KG (H_{\mathcal{T}})$ by Lemma \ref{00} and thus $\KG(H_{\mathcal{T}})=\infty$, because $\KG(\Gamma)=\infty$ by Theorem \ref{00000}(\ref{Sch}).
\end{proof}


We give now two examples to illustrate the situation. In the first we present a hybrid algebra with Krull-Gabriel dimension undefined whose quiver is not triangulation quiver. This shows that, in a sense, the converse implication in Proposition \ref{hybrid} does not hold.
 
\begin{example}\label{ex1}
Let $H=H_{\mathcal{T}}(Q, f, m_{\bullet}, c_{\bullet})$ be a hybrid algebra where $(Q,f)$ is of the form 
\[
  \xymatrix@R=1pc{
   & 1 \ar[rd]^{\beta} \ar@<-.5ex>[dd]_{\xi}  \\
   3   \ar@(ld,ul)^{\varrho}[]  \ar[ru]^{\alpha}
   && 4   \ar@(ru,dr)^{\gamma}[]  \ar[ld]^{\nu}   \\
   & 2 \ar[lu]^{\delta}  \ar@<-.5ex>[uu]_{\eta}
  }
\]
where $m_{\bullet}, c_{\bullet}$ are trivial weight and parameter functions, and where   $f$ is the permutation with orbits $(\alpha \; \beta \; \nu \; \delta)$, $(\xi \; \eta)$, $(\varrho)$, $(\gamma)$ and $\mathcal{T}=\{\varrho, \;\gamma \}$. Then $(\alpha \;\xi \; \delta \;\varrho)$ and $(\beta \; \gamma \; \nu \;\eta)$ are $g$-orbits of length $4$ and hence there are no virtual arrows in $(Q,f)$. Observe that $(Q,f)$ is not a triangulation quiver. By definition we have the following relations in $H$:
\begin{align*}
 \varrho ^2 &=\alpha \xi \delta ,
 &
 \gamma ^2&=  \nu \eta \beta ,
 &
 \alpha \beta &=0 ,
 &
 \beta \nu &= 0 ,
 &
 \nu \delta& =0,
 &
 \delta \alpha& =0
 \\
 \eta \xi & =0 ,
 &
 \xi \eta & =0,
 &
 \gamma ^2 \nu & =0 ,
 &
 \beta \gamma ^2 & =0 ,
 &
 \varrho ^2 \alpha & =0 ,
 &
 \delta \varrho ^2 & =0
 \end{align*}
\begin{center}
and the commutativity relations $c_{\alpha}B_{\alpha} - c_{\ba}B_{\ba}$ for any arrow $\alpha \in Q$.
\end{center}
We repeat the construction of the string quotient algebra $\Gamma$ of $H$. Namely, assume that $\Gamma=KQ/L$ where $L$ is the ideal generated by relations of the form $\alpha f(\alpha)$ and $A_{\alpha}$. Hence $L$ is generated by the paths of length two: $\varrho ^2$, $ \gamma ^2 $, $ \alpha \beta $, $ \beta \nu $, $ \nu \delta$, $\delta \alpha$, $\eta \xi$, $\xi \eta$ and all paths of length three. Then $u=\xi \nu^{-1} \gamma \beta^{-1}$ and $w=\xi \delta \varrho^{-1} \alpha \xi \nu^{-1} \gamma \beta^{-1}$ are primitive walks of the bound quiver $(Q,L)$ such that $uw$ and $wu$ are also primitive walks and thus we conclude that $\Gamma$ is of non-polynomial growth (see e.g. the proof of \cite[Lemma 1]{Sk0}). Again, by Theorem \ref{00000}(\ref{Sch}), we have $\KG(\Gamma)=\infty$ and by Lemma \ref{0} we obtain that $\KG(H)=\infty$.  
\end{example}

The second example shows a hybrid algebra associated to a non-empty set of triangles which has finite Krull-Gabriel dimension.

\begin{example} \label{ex2}
Consider a tetrahedral algebra $\Lambda(\lambda)=KQ(\Lambda(\lambda))/I$, $\lambda \in K^*$, given as in Section 3. By $e_i$ for $i \in Q_0$ we denote the trivial path at vertex $i$. Let $e=1-e_1$ be an idempotent of algebra $\Lambda(\lambda)$. Then $$H(\lambda)=e\Lambda(\lambda)e=eKQ(\Lambda(\lambda))e/eIe$$ is isomorphic with the quotient path algebra $K\widetilde{Q}/\widetilde{I}$ of a biserial quiver $(\widetilde{Q},f')$, where $\widetilde{Q}$ is of the form:
\[
\begin{tikzpicture}
[scale=.7]

\node (2) at (0,-2) {$2$};
\node (3) at (2,-4) {$3$};
\node (4) at (-2,0) {$4$};
\node (5) at (2,0) {$5$};
\node (6) at (-2,-4) {$6$};
\coordinate (2) at (0,-2);
\coordinate (3) at (2,-4);
\coordinate (4) at (-2,0);
\coordinate (5) at (2,0);
\coordinate (6) at (-2,-4);
\node (2) at (0,-2) {$2$};
\node (3) at (2,-4) {$3$};
\node (4) at (-2,0) {$4$};
\node (5) at (2,0) {$5$};
\node (6) at (-2,-4) {$6$};
\draw[->,thick] (-2.3,-0.2) arc [start angle=140, delta angle=79, radius=3cm] node[midway,left] {$\widetilde{\nu}$};
\draw[->,thick] (2.3,-4) arc [start angle=-40, delta angle=79, radius=3cm] node[midway,right] {$\widetilde{\sigma}$};
\draw[->,thick]
(2) edge node [right] {$\varepsilon$} (5)
(2) edge node [left] {$\varrho$} (6)
(3) edge node [right] {$\beta$} (2)
(4) edge node [left] {$\alpha$} (2)
(5) edge node [right] {$\xi$} (3)
(5) edge node [above] {$\eta$} (4)
(6) edge node [left] {$\omega$} (4)
(6) edge node [below] {$\mu$} (3)
;
\end{tikzpicture}
\]
and permutation $f'$ has orbits $(\alpha\; \varrho\; \omega)$, $(\xi\; \beta\; \varepsilon)$, inherited from the action of $f$ on $Q(\Lambda(\lambda))$, and an orbit $(\eta\; \widetilde{\nu}\; \mu\; \widetilde{\sigma})$ of length $4$. The generators of $\widetilde{I}$ come from the generators of $I$. Namely, $\widetilde{I}$ is generated by commutativity relations:
\begin{align*}
  \alpha \varrho &= \widetilde{\nu} ,
  &
  \varrho \omega &= \lambda \varepsilon \eta ,
  &
  \omega \alpha &= \mu \beta ,
  &
  \beta \varepsilon &= \widetilde{\sigma} ,
  &
  \varepsilon \xi &= \varrho \mu ,
  &
  \xi \beta &= \lambda \eta \alpha ,
\end{align*}
\begin{center}
$c_{\theta}B_{\theta}-c_{\bar{\theta}}B_{\bar{\theta}} \qquad $\text{for all arrows} $\theta \in \widetilde{Q},$
\end{center}
and zero-relations of the form:
\begin{align*}
  \eta \widetilde{\nu} &=0,
  &
  \widetilde{\sigma}\eta &=0,
  &
  \widetilde{\nu} \mu &=0,
  &
  \mu  \widetilde{\sigma} &=0,
\end{align*}
\begin{center}
$\theta f'(\theta)g'(f'(\theta)) =0 \quad $\text{and} $\quad   \theta g'(\theta)f'(g'(\theta))=0 \quad $\text{for all arrows} $\theta \in \widetilde{Q}\; (g'=\overline{f'}).$
\end{center}
Note that $H(\lambda)$ is a hybrid algebra $H_{\mathcal{T}}(\widetilde{Q}, f', m'_{\bullet}, c_{\bullet})$ with the set of triangles $\mathcal{T}=\{(\alpha\; \varrho\; \omega), (\xi\; \beta\; \varepsilon)\}$, a trivial weight function $m'_{\bullet}$ and parameter function $c'_{\bullet}$ inherited from $\Lambda(\lambda)$, that is, $c'_{\alpha}=c'_{\varepsilon}=c'_{\eta}=\lambda$ and is trivial everywhere else. Since $f'$ is not of order $3$, the quiver $(Q,f')$ is not triangulation quiver and we cannot apply Proposition \ref{hybrid} to compute Krull-Gabriel dimension of $H(\lambda)$. In order to do so, we need an admissible presentation of this algebra.
Note that the arrows $\widetilde{\nu}$ and $\widetilde{\sigma}$ are virtual, thus the Gabriel quiver $Q_{H(\lambda)}$ of $H(\lambda)$ is of the form:
\[
\begin{tikzpicture}
[scale=.6]

\node (2) at (0,-2) {$2$};
\node (3) at (2,-4) {$3$};
\node (4) at (-2,0) {$4$};
\node (5) at (2,0) {$5$};
\node (6) at (-2,-4) {$6$};
\coordinate (2) at (0,-2);
\coordinate (3) at (2,-4);
\coordinate (4) at (-2,0);
\coordinate (5) at (2,0);
\coordinate (6) at (-2,-4);
\node (2) at (0,-2) {$2$};
\node (3) at (2,-4) {$3$};
\node (4) at (-2,0) {$4$};
\node (5) at (2,0) {$5$};
\node (6) at (-2,-4) {$6$};
\draw[->,thick]
(2) edge node [right] {$\varepsilon$} (5)
(2) edge node [left] {$\varrho$} (6)
(3) edge node [right] {$\beta$} (2)
(4) edge node [left] {$\alpha$} (2)
(5) edge node [right] {$\xi$} (3)
(5) edge node [above] {$\eta$} (4)
(6) edge node [left] {$\omega$} (4)
(6) edge node [below] {$\mu$} (3)
;
\end{tikzpicture}
\]
and admissible ideal $I_{H(\lambda)}$ is generated by:
\[
  \varrho \omega - \lambda \varepsilon \eta ,
  \quad
  \omega \alpha - \mu \beta ,
  \quad
  \varepsilon \xi - \varrho \mu ,
  \quad
  \xi \beta - \lambda \eta \alpha ,
\]
\[
 \alpha \varrho \mu ,\quad
 \alpha \varepsilon \xi ,\quad 
 \beta \varepsilon \eta ,\quad 
 \beta \varrho \omega ,\quad 
 \omega \alpha \varepsilon ,\quad 
 \mu \beta \varepsilon  ,\quad 
 \eta \alpha \varrho ,\quad 
 \xi \beta \varrho .
\]

\noindent Observe that $H(\lambda)$ is a trivial extension of the one-point extension algebra $A[M_{\lambda}]$ of a path algebra $A=K\Delta$ of a quiver $\Delta$ of Euclidean type $\widetilde{A}_3$
\[
\begin{tikzpicture}
[scale=.4]
\node (3) at (2,-4) {$3$};
\node (4) at (-2,0) {$4$};
\node (5) at (2,0) {$5$};
\node (6) at (-2,-4) {$6$};
\coordinate (3) at (2,-4);
\coordinate (4) at (-2,0);
\coordinate (5) at (2,0);
\coordinate (6) at (-2,-4);
\node (3) at (2,-4) {$3$};
\node (4) at (-2,0) {$4$};
\node (5) at (2,0) {$5$};
\node (6) at (-2,-4) {$6$};
\draw[->,thick]
(5) edge node [right] {$\xi$} (3)
(5) edge node [above] {$\eta$} (4)
(6) edge node [left] {$\omega$} (4)
(6) edge node [below] {$\mu$} (3)
;
\end{tikzpicture}
\]
by an indecomposable module $M_{\lambda}$ 
\[
\begin{tikzpicture}
[scale=.4]
\node (3) at (2,-4) {$K$};
\node (4) at (-2,0) {$K$};
\node (5) at (2,0) {$K$};
\node (6) at (-2,-4) {$K$};
\coordinate (3) at (2,-4);
\coordinate (4) at (-2,0);
\coordinate (5) at (2,0);
\coordinate (6) at (-2,-4);
\node (3) at (2,-4) {$K$};
\node (4) at (-2,0) {$K$};
\node (5) at (2,0) {$K$};
\node (6) at (-2,-4) {$K$};
\draw[->,thick]
(5) edge node [right] {$1$} (3)
(5) edge node [above] {$\lambda$} (4)
(6) edge node [left] {$1$} (4)
(6) edge node [below] {$1$} (3)
;
\end{tikzpicture}
\]
lying on the mouth of a standard homogenous tube in $\Gamma_A$, see \cite{SiSk3}. Then $H(\lambda)$ is a symmetric algebra of Euclidean type \cite{BoSk}, and hence a standard domestic selfinjective algebra of infinite representation type \cite{Sk1} (see also \cite{Sk2}). Then, following Theorem \ref{00000}(\ref{P}), we have $\KG(H(\lambda))=2$. Note that $H(\lambda)$ is not periodic by \cite{BES}.
\end{example}

Now we are able to present the main result for the weighted surface algebras.

\begin{thm} \label{wsa}
Assume that $\Lambda=\Lambda(Q, f, m_{\bullet}, c_{\bullet})$ is a weighted surface algebra. Then $\KG(\Lambda)=\infty$.
\end{thm}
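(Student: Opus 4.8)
The plan is to reduce the general weighted surface algebra to the cases already handled and then dispatch the few remaining exceptional families by hand. A weighted surface algebra $\Lambda=\Lambda(Q,f,m_\bullet,c_\bullet)$ is precisely the hybrid algebra $H_{Q_1}(Q,f,m_\bullet,c_\bullet)$ in which every arrow is a triangle, and the hypotheses $|Q_0|\ge 2$ and $m_\alpha n_\alpha\ge 2$ are built into the definition. First I would invoke Proposition \ref{hybrid} with $\mathcal{T}=Q_1$: if $\Lambda$ is \emph{not} isomorphic to any of the exceptional algebras $\Lambda(\lambda)$, $T(\lambda)$, $S(\lambda)$, $D(\lambda)$, $D(\lambda)^{(1)}$, $D(\lambda)^{(2)}$ for $\lambda\in K^*$, then Proposition \ref{hybrid} immediately yields $\KG(\Lambda)=\infty$, since $H_{Q_1}=\Lambda$ in this situation. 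Thus the entire content of the theorem is concentrated in the exceptional families, and the rest of the proof is a finite case analysis.

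For the exceptional families I would split according to growth type, using Theorem \ref{C}(3). The non-singular members $\Lambda(\lambda)$, $T(\lambda)$, $S(\lambda)$, $D(\lambda)$ with $\lambda\in K\setminus\{0,1\}$ are tame of polynomial but non-domestic growth; since each is a symmetric (or in the cases $T(1),S(1)$ one must be slightly careful, but here $\lambda\neq 1$) periodic algebra, and in fact a standard selfinjective algebra of infinite representation type that is non-domestic of polynomial growth, Theorem \ref{00000}(\ref{P})(b) gives $\KG=\infty$. I would justify the standardness and polynomial-growth-non-domestic classification by citing the references already gathered in the discussion after Theorem \ref{C} (\cite{Sk1}, \cite{BiS}). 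The singular members $D(1)$, $\Lambda(1)$, $T(1)$, $S(1)$, together with $D(\lambda)^{(1)}$ and $D(\lambda)^{(2)}$ for all $\lambda\in K^*$, are of non-polynomial growth by the discussion following Theorem \ref{D}. For these I would produce a non-domestic string quotient directly: the explicit presentations of $D(\lambda)^{(1)}$ and $D(\lambda)^{(2)}$ are given in the text, and by passing to the string quotient $\Gamma=KQ/L$ (collapsing the commutativity relations $\alpha f(\alpha)-c_{\bar\alpha}A_{\bar\alpha}$ to the zero relations $\alpha f(\alpha)$ and killing the long monomials $A_\alpha$) one obtains a string algebra on the same quiver; exhibiting a primitive biserial walk $u$ with $u^2$ again a walk (as in Example \ref{ex1}) shows it is non-domestic, whence $\KG(\Gamma)=\infty$ by Theorem \ref{00000}(\ref{Sch}), and $\KG(\Lambda)=\infty$ follows by Lemma \ref{00}.

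The step I expect to be the main obstacle is this last one, the explicit verification for the singular and the $D(\lambda)^{(i)}$ families. These are exactly the algebras whose string quotients are tame \emph{domestic} (as the text explicitly warns), so the naive string-quotient argument of Proposition \ref{hybrid} does \emph{not} apply and cannot be used as a black box; one genuinely needs another route to show $\KG=\infty$. The cleanest uniform device would be to use the orbit-covering relations already recorded in Section 3, namely $D(\lambda)=\Lambda(\lambda)/\mathbb{Z}_3$, $T(\lambda)=S(\lambda)/\mathbb{Z}_2$ and $Q(D(\lambda)^{(1)})=Q(D(\lambda)^{(2)})/H$, together with Theorem \ref{A}: since a Galois covering does not increase Krull--Gabriel dimension, it suffices to compute $\KG$ for one representative per covering class. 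I would therefore handle $\Lambda(1)$, $S(1)$, $D(\lambda)^{(2)}$ (and the polynomial-growth non-domestic non-singular cases via Theorem \ref{00000}(\ref{P})) and then transfer to their quotients. For the two singular cases $\Lambda(1)$ and $S(1)$ the most economical argument is to appeal to their known membership among standard non-domestic selfinjective (or their established non-polynomial growth) and invoke Theorem \ref{00000}, while for $D(\lambda)^{(1)},D(\lambda)^{(2)}$ one constructs the non-domestic string quotient explicitly as in Example \ref{ex1}. Assembling these pieces, every weighted surface algebra has $\KG(\Lambda)=\infty$.
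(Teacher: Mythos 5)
Your reduction to the exceptional families via Proposition \ref{hybrid}, and your treatment of the non-singular polynomial-growth cases ($\Lambda(\lambda)$, $S(\lambda)$, and directly also $D(\lambda)$, $T(\lambda)$, for $\lambda\in K\setminus\{0,1\}$) via Theorem \ref{00000}(\ref{P})(b) together with the Galois coverings $D(\lambda)\cong\Lambda(\lambda)/\mathbb{Z}_3$, $T(\lambda)\cong S(\lambda)/\mathbb{Z}_2$ and Theorem \ref{A}, match the paper's argument. However, there is a genuine gap in exactly the cases you yourself flag as the main obstacle, and both of your proposed ways around it fail. First, for $D(\lambda)^{(1)}$ and $D(\lambda)^{(2)}$ you propose to exhibit a non-domestic string quotient ``as in Example \ref{ex1}''; but as you note two sentences earlier (and as the paper states explicitly), the string quotients $\Gamma=KQ/L$ of these algebras are tame \emph{domestic}, so no primitive walk $u$ with $u^2$ again a walk exists, Theorem \ref{00000}(\ref{Sch}) cannot be triggered, and re-proposing this argument after acknowledging the obstacle is self-contradictory. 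Second, for the singular algebras $\Lambda(1)$ and $S(1)$ you propose to appeal to their membership among standard non-domestic selfinjective algebras, or to their non-polynomial growth, and invoke Theorem \ref{00000}; this also fails, because $\Lambda(1)$ and $S(1)$ are of \emph{non-polynomial} growth by Theorem \ref{C}(3), while Theorem \ref{00000}(\ref{P}) covers only standard selfinjective algebras that are domestic or non-domestic of polynomial growth. The paper's toolkit contains no black-box statement for selfinjective algebras of non-polynomial growth; producing one is precisely the content of the theorem being proved.

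The missing ingredient is the use of \emph{pg-critical algebras} via Theorem \ref{00000}(\ref{KP}). For $D(\lambda)^{(i)}$, $i\in\{1,2\}$, the paper passes to the quotient $A^{(i)}$ obtained by replacing each commutativity relation $\omega-v$ by the two zero relations $\omega=0$ and $v=0$ (this is not the string quotient $\Gamma$); by the proof of Lemma 6.9 of \cite{ES4}, $A^{(i)}$ admits a Galois covering $F^{(i)}\colon R\rightarrow A^{(i)}$ where $R$ contains a finite convex subcategory $B$ which is pg-critical. Then $\KG(B)=\infty$, hence $\KG(R)=\infty$ by Lemma \ref{00}(a), hence $\KG(A^{(i)})=\infty$ by Theorem \ref{A}, and finally $\KG(D(\lambda)^{(i)})=\infty$ by Lemma \ref{00}(b). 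For $\Lambda(1)$ and $S(1)$, the paper uses the fact that they are trivial extensions $\T(B)$ of pg-critical algebras $B$: the embedding of $B$ as a finite convex subcategory of $\widehat{B}$ forces $\KG(\widehat{B})=\infty$, and Corollary \ref{000} then gives $\KG(\T(B))=\infty$. The singular quotients $D(1)\cong\Lambda(1)/\mathbb{Z}_3$ and $T(1)\cong S(1)/\mathbb{Z}_2$ then follow from the covering argument you already set up. Without the pg-critical ingredient, your proof cannot be completed for these families.
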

\begin{proof}
We have the following cases to consider.

(1)  Assume that $\Lambda$ is not isomorphic to a weighted surface algebra $D(\lambda)^{(1)}$, $D(\lambda)^{(2)}$, $D(\lambda)$, $\Lambda(\lambda)$, $T(\lambda)$, $S(\lambda)$, for any $\lambda \in K^*$. Then $\Lambda$ is a hybrid algebra $H_{\mathcal{T}}(Q, f, m_{\bullet}, c_{\bullet})$ with $\mathcal{T}=Q_1$, the same triangulation quiver $(Q, f)$, weight function $m_{\bullet}$ and parameter function $c_{\bullet}$. Hence $\KG(\Lambda)=\infty$ by Proposition \ref{hybrid}.

(2) Let $\Lambda$ be isomorphic to $D(\lambda)^{(1)}$ or $D(\lambda)^{(2)}$, for $\lambda\in K^*$. We use the arguments given in the proof of \cite[Lemma 6.9]{ES4}, in which it was shown that $D(\lambda)^{(1)}$, $D(\lambda)^{(2)}$ are of non-polynomial growth. Namely, let $(Q^{(i)}, I^{(i)})$ be the bound quiver of the algebra $D(\lambda)^{(i)}$, for $i \in \{1,2\}$. Consider the quotient algebra
$A^{(i)}$ of $D(\lambda)^{(i)}$ given by the Gabriel quiver $Q^{(i)}$ and the ideal of relations generated by zero-relations $\omega=0$ and
$v=0$ for any commutativity relation $\omega-v$ in $I^{(i)}$. By the proof of \cite[Lemma 6.9]{ES4} we know that $A^{(i)}$ admits
the Galois covering $F^{(i)}\colon R \rightarrow A^{(i)}$ by a locally bounded category $R$ such that $A^{(i)}=R/G^{(i)}$ for some free
abelian group $G^{(i)}$, $i \in \{1,2\}$. Further, $R$ contains a finite full convex subcategory $B$, which is a minimal pg-critical algebra listed in \cite[Theorem 3.2]{NorSko}. Hence we have the following diagram:
      \vspace{-0.3cm}\[
    \begin{tikzpicture}
    [scale=.4]
    \node (D) at (4.6,4) {$D(\lambda)^{(i)}$};
    \node (R) at (-4,0) {$R$};
    \node (A) at (2,0) {$R/G^{(i)}$};
    \node (A') at (4.6,0) {$=A^{(i)}$};
    \node (B) at (-4,-3.4) {$B$};
    \draw[->,thick]
    (B) edge node [left] {$\iota$} (R)
    (R) edge node [above] {$F$} (A)
    (D) edge node [right] {$p$} (A')
    ;
    \end{tikzpicture}\vspace{-0.3cm}
    \]
    where $p \colon D(\lambda)^{(i)} \rightarrow A^{(i)}$  is a surjection of $K$-algebras and $\iota \colon B \rightarrow R$ is an embedding of locally bounded $K$-categories. Since $B$ is pg-critical,  following Theorem \ref{00000}(\ref{KP}) we have $\KG(B) =\infty$. By Lemma \ref{00}(a) we conclude that $\KG(B) \leq \KG(R)$, and therefore 
    $\KG(R) =\infty$. Applying Theorem \ref{A} to the Galois covering $F^{(i)}\colon R \rightarrow A^{(i)}$ we have $\KG(A^{(i)})=\infty$ and then, by Lemma \ref{00}(b), $\KG(\Lambda)=\KG(D(\lambda)^{(i)})=\infty$ for $i \in \{1,2\}$ and $\lambda \in K^*$.

(3) Assume that $\Lambda$ is isomorphic to $D(\lambda)$, $\Lambda(\lambda)$, $T(\lambda)$ or $S(\lambda)$ for $\lambda\in K^*$.
    We have the following characterization from \cite{ES4}*{Section 3}:
    \begin{itemize}
    \item[(i)]  non-singular tetrahedral algebras $\Lambda(\lambda)$, $\lambda \in K \backslash \{0,1\}$, are isomorphic to the trivial extension algebras $\T(B(\lambda))$ of some tubular algebras $B(\lambda)$ of type $(2,2,2,2)$ (see \cite{R} for tubular algebras),
    \item[(ii)] non-singular spherical algebras $S(\lambda)$, $\lambda \in K \backslash \{0,1\}$, are isomorphic to the trivial extension algebras $\T(C(\lambda))$ of some tubular algebras $C(\lambda)$ of type $(2,2,2,2)$,
    \item[(iii)] the singular tetrahedral algebra $\Lambda(1)$ is isomorphic to  the trivial extension algebra $\T(B(1))$ of some pg-critical algebra $B(1)$,
    \item[(iv)] the singular spherical algebra $S(1)$ is isomorphic to the trivial extension algebra $\T(C(1))$ of some pg-critical algebra $C(1)$,
    \item[(v)]  disc algebras $D(\lambda)$ are isomorphic to orbit algebras $\Lambda(\lambda)/\mathbb{Z}_3$, for any $\lambda \in K^*$,
    \item[(vi)] triangle algebras $T(\lambda)$ are isomorphic to  orbit algebras $S(\lambda)/ \mathbb{Z}_2$, for any $\lambda \in K^*$.
    \end{itemize} 
In (i) and (ii), a weighted surface algebra $\Lambda$ is isomorphic to the orbit algebra $\widehat{B}/G$ of the repetitive category $\widehat{B}$ of a tubular algebra $B$ with respect to the infinite cyclic group $G=(\nu_{\widehat{B}})$. Thus it is a standard selfinjective non-domestic algebra of polynomial growth by \cite{Sk1}*{Theorem}. Then, following Theorem \ref{00000}(\ref{P}), we obtain $\KG(\Lambda)=\infty$. Further, in case of (iii) and (iv), a weighted surface algebra $\Lambda$ is isomorphic to the orbit algebra $\widehat{B}/G$ of the repetitive category $\widehat{B}$ of some pg-critical algebra $B$ and the infinite cyclic group $G=(\nu_{\widehat{B}})$. We consider an embedding of finite convex subcategory $B$ into the category $\widehat{B}$. Then by Lemma \ref{00} we get $\KG(B)\leq \KG(\widehat{B})$ which yields $\KG(\widehat{B})=\infty$ because $B$ is pg-critical, see Theorem \ref{00000}(\ref{KP}). Hence by Corollary \ref{000}, we obtain that $\KG(\Lambda)=\KG(\T(B))=\infty$. Finally, in (v) and (vi) a weighted surface algebra $\Lambda$ is isomorphic to the orbit algebra $\Lambda'/H$, where $\Lambda'$ is isomorphic to $\Lambda(\lambda)$ or $S(\lambda)$, for any $\lambda \in K^*$, and $H$ is a finite cyclic group. By applying Theorem \ref{A} to the Galois covering $F \colon\,  \Lambda' \rightarrow \Lambda'/H$, we obtain $\KG(\Lambda)=\KG(\Lambda'/H)=\infty$, because $\KG(\Lambda')=\infty$ by (i)-(iv).
\end{proof}

\begin{rmk}
The Galois covering $F^{(i)}\colon R \rightarrow A^{(i)}$ from the proof of (2) has a torsion-free covering group $G^{(i)}$, $i \in \{1,2\}$, but $R$ is not locally support-finite (see the proof of \cite[Lemma 6.9]{ES4}). Also in (3)(iii) and (iv), in the Galois covering $F\colon \widehat{B} \rightarrow \T(B)$ the repetitive category $\widehat{B}$ of the pg-critical algebra $B$ is not locally support-finite, see \cite{AS0}*{Theorem (B)}. In turn, 
in (3)(v) and (vi) we have the Galois covering $F \colon  \Lambda' \negthickspace \rightarrow \Lambda'/H$ in which the opposite situation occurs. Namely, $\Lambda'$ is finite, hence locally support-finite, and the covering group is not torsion-free. Therefore, in all of these cases we cannot apply Theorem \ref{B}, but only the general result of Theorem \ref{A}.
\end{rmk}

\begin{rmk}
Assume that $\lambda \in K \backslash \{0,1\}$. By the characterization of selfinjective tubular algebras given by Bia\l kowski and Skowro\'nski in \cite{BiS}, we conclude that the disc algebras $D(\lambda)\cong\Lambda(\lambda)/\mathbb{Z}_3$ and the triangle algebras $T(\lambda)\cong S(\lambda)/ \mathbb{Z}_2$ are isomorphic to orbit algebras $\widehat{B}/G$ of the repetitive category $\widehat{B}$ of a tubular algebra $B$ of type  $(2,2,2,2)$ and some admissible infinite cyclic group $G$ of automorphisms of $\widehat{B}$. Thus they are standard selfinjective non-domestic algebras of polynomial growth by \cite{Sk1}. Therefore, to determine the Krull-Gabriel dimension of $D(\lambda)$ and $T(\lambda)$, for $\lambda \in K \backslash \{0,1\}$, we may also apply Theorem \ref{00000}(\ref{P}). Nevertheless, the case of $\lambda=1$ requires the arguments given in the proof of Theorem \ref{wsa}.
\end{rmk}

We can now easily conclude as follows.

\begin{thm}\label{support}
The class of weighted surface algebras supports Prest's conjecture.
\end{thm}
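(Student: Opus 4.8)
The plan is to read the statement off directly from the two structural results already established, Theorem \ref{C} and Theorem \ref{wsa}, by observing that Prest's conjecture holds \emph{vacuously} on this class. Recall that the conjecture asserts, for a finite dimensional algebra $A$, the equivalence that $A$ is of domestic representation type if and only if $\KG(A)$ is finite. To confirm it for weighted surface algebras it therefore suffices to show that, for every weighted surface algebra $\Lambda$, both sides of this biconditional fail simultaneously; the equivalence then holds as an equivalence of two false statements.

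First I would invoke Theorem \ref{C}, which asserts that every weighted surface algebra $\Lambda$ is tame of non-domestic type. In particular no weighted surface algebra is domestic, so the left-hand side of Prest's equivalence never holds on this class. Next I would invoke the main Theorem \ref{wsa}, which gives $\KG(\Lambda)=\infty$ for every weighted surface algebra $\Lambda$; hence none has finite Krull-Gabriel dimension, and the right-hand side also never holds. Since for each $\Lambda$ the property of being domestic and the property of having finite $\KG$ both fail, the biconditional is satisfied for every member of the class, which is precisely the statement that the class of weighted surface algebras supports Prest's conjecture.

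The only point that deserves a careful word is the non-domesticity of the polynomial growth members listed in Theorem \ref{C}(3), namely the non-singular algebras $D(\lambda)$, $\Lambda(\lambda)$, $T(\lambda)$, $S(\lambda)$ for $\lambda\in K\setminus\{0,1\}$. For these I would recall, as in the proof of Theorem \ref{wsa}, that each is isomorphic to an orbit algebra $\widehat{B}/G$ of the repetitive category of a tubular algebra $B$ of type $(2,2,2,2)$ with respect to an admissible infinite cyclic group. By Skowro\'nski's theorem \cite{Sk1}*{Theorem} together with the characterization of selfinjective tubular algebras of Bia\l kowski--Skowro\'nski \cite{BiS}*{Theorem 4.2}, these are standard selfinjective of non-domestic polynomial growth, confirming that they are genuinely non-domestic and completing the verification that no weighted surface algebra is domestic.

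I do not expect any real obstacle: once Theorem \ref{C} and Theorem \ref{wsa} are in hand, the argument is a formal matching of two conditions that are always false on the class. The only substantive external input is the classification underlying the non-domesticity of the polynomial growth cases, which is supplied by the results cited above.
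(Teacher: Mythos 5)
Your proof is correct and follows exactly the paper's own argument: Theorem \ref{C} gives that every weighted surface algebra is non-domestic, Theorem \ref{wsa} gives that its Krull-Gabriel dimension is undefined, so both sides of Prest's biconditional fail and the conjecture holds on this class. The extra paragraph justifying non-domesticity of the polynomial growth cases is already built into the statement of Theorem \ref{C} (with the references \cite{Sk1} and \cite{BiS} supplied in the surrounding text), so no additional verification is needed.
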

\begin{proof}
By Theorem \ref{C} any weighted surface algebra is tame non-domestic and by Theorem \ref{wsa} its Krull-Gabriel dimension is undefined, hence not finite.
\end{proof}

Applying Proposition \ref{E}, we obtain the following generalisation of Theorem \ref{wsa}.
\begin{cor} \label{soc}
Let $A$ be a symmetric algebra which is socle equivalent to a weighted surface algebra $\Lambda$. If $\Lambda$ is not isomorphic to $D(\lambda)^{(1)}$, $D(\lambda)^{(2)}$, $D(\lambda)$, for $\lambda \in K^*$, then $\KG(A)=\infty$. 
\end{cor}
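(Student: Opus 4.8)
The plan is to reduce the claim to the two theorems already available to us: Proposition \ref{E}, which produces a non-polynomial-growth string quotient for algebras socle equivalent but not isomorphic to most weighted surface algebras, and Theorem \ref{00000}(\ref{Sch}), which asserts that non-domestic string algebras have undefined Krull-Gabriel dimension. The dichotomy in the statement of Proposition \ref{E} suggests splitting into cases according to whether $A$ is isomorphic to $\Lambda$ or only socle equivalent to it.

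First I would dispose of the case $A \cong \Lambda$. Here $A$ is literally a weighted surface algebra, and since $\Lambda$ is assumed not isomorphic to $D(\lambda)$, $D(\lambda)^{(1)}$, $D(\lambda)^{(2)}$, it remains to observe that Theorem \ref{wsa} already gives $\KG(\Lambda) = \infty$ regardless of which exceptional family $\Lambda$ belongs to --- Theorem \ref{wsa} covers all weighted surface algebras with no exclusions. Thus $\KG(A) = \KG(\Lambda) = \infty$ in this branch immediately.

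The substantive case is when $A$ is socle equivalent but \emph{not} isomorphic to $\Lambda$. Here I would first handle a trivial subcase: if $K_0(A)$ has rank $1$, then $A$ is local and the hypotheses force $\Lambda$ into an excluded family (the only local weighted surface algebras being discs and their relatives), contradicting the assumption, so this subcase is vacuous; alternatively one checks directly that rank-$1$ socle-equivalent-not-isomorphic pairs cannot arise under our exclusions. Assuming then that $K_0(A)$ has rank at least $2$, the hypotheses of Proposition \ref{E} are exactly met --- $A$ is symmetric, socle equivalent but not isomorphic to $\Lambda$, and $\Lambda$ avoids $D(\lambda), D(\lambda)^{(1)}, D(\lambda)^{(2)}$ --- so Proposition \ref{E} furnishes a string quotient algebra $\Gamma$ of $A$ that is of non-polynomial growth, hence non-domestic. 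By Lemma \ref{00} (applied to the quotient $A \twoheadrightarrow \Gamma$) we get $\KG(\Gamma) \leq \KG(A)$, and by Theorem \ref{00000}(\ref{Sch}) we have $\KG(\Gamma) = \infty$; combining these forces $\KG(A) = \infty$.

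The main obstacle I anticipate is the rank-$1$ bookkeeping: one must confirm that the rank hypothesis in Proposition \ref{E} is automatically satisfied under the exclusions of the corollary, so that no genuine case is left unaddressed. The cleanest way to close this gap is to argue that a symmetric algebra socle equivalent to a weighted surface algebra with $K_0$ of rank $1$ is forced, by the classification in \cite{BEHSY}, into exactly the disc-type families that the corollary excludes; once that observation is in place, the two cases above are exhaustive and the proof is a short assembly of the cited results.
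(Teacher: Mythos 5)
Your proposal is correct and follows essentially the same route as the paper: the case where $A$ is isomorphic to a weighted surface algebra is handled by Theorem \ref{wsa}, and the socle-equivalent-but-not-isomorphic case by Proposition \ref{E} combined with the quotient inequality (Lemma \ref{00}(b)) and Theorem \ref{00000}(\ref{Sch}). The only point worth tightening is your rank-$1$ discussion: there are no weighted surface algebras with Grothendieck group of rank $1$ at all, since the definition requires $|Q_0|\geq 2$ (in particular the disc algebras are not local), and because socle equivalence preserves the number of simple modules (the socle lies in the radical of these selfinjective algebras), $K_0(A)$ automatically has rank at least $2$, so the hypothesis of Proposition \ref{E} is met without any appeal to the classification in \cite{BEHSY}.
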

\begin{proof}
  If $A$ is isomorphic to a weighted surface algebra, then the statement follows from Theorem \ref{wsa}. Assume now that $A$ is socle equivalent, but not isomorphic to a weighted surface algebra $\Lambda$. Then by Proposition \ref{E}, there is a string quotient algebra $\Gamma$ of $A$ which is of non-polynomial growth. As before we conclude that $\KG(A)=\infty$ by Lemma \ref{0} and Theorem \ref{00000}(\ref{Sch}).
\end{proof}


Following \cite{ES3}, an algebra $A$ is of \emph{generalized quaternion type} if $A$ is representation-infinite tame symmetric and every simple module in $\mod (A)$ is periodic of period $4$. These algebras have their origin in the study of blocks of group algebras with generalized quaternion defect groups \cite{E} and, as it was recently proved in \cite{ES3}, they are closely related to weighted surface algebras. We finish this section with determining Krull-Gabriel dimension of some particular class of algebras of generalized quaternion type. 

\begin{rmk}\label{rem_fin1} In \cite{ES2} Erdmann and Skowro\'nski defined the \emph{higher tetrahedral algebras} $\Lambda(m, \lambda)$, a new class of symmetric algebras with underlying triangulation quiver associated to the tetrahedron (as in Section 3) and a set of relations depending on parameter $\lambda \in K$ and natural number $m\geq 2$. If $m=1$, then the set of relations is exactly the same as for tetrahedral algebras. However, unlike tetrahedral algebras, higher tetrahedral algebras are tame of non-polynomial growth, for any $\lambda \in K$. In fact, they admit a pg-critical algebra as a quotient algebra, see the proof of \cite[Proposition 5.6]{ES2}. Moreover, higher tetrahedral algebras are periodic of period $4$ and so they share essential properties with weighted surface algebras (see Theorem \ref{C}). Actually, these two classes of algebras play a crucial role in a characterization of tame symmetric periodic algebras of period $4$. 
\end{rmk}

It was shown in \cite{ES3} that an algebra $A$ is of generalized quaternion type with $2$-regular Gabriel quiver having at least $3$ vertices if and only if $A$ is socle equivalent to a weighted surface algebra, different from the singular tetrahedral algebra $\Lambda(1)$, or is isomorphic to a higher tetrahedral algebra $\Lambda(m, \lambda)$ for some $\lambda \in K^*$. In particular, we get that $A$ is periodic of period $4$. This characterization leads us to the following conclusion.

\begin{cor} \label{quaternion}
Let $A$ be an algebra of generalized quaternion type which has $2$-regular Gabriel quiver with at least $3$ vertices. Assume that $A$ is not socle equivalent with $D(\lambda)^{(2)}$, for any $\lambda\in K^*$. Then $\KG(A)=\infty$.
\end{cor}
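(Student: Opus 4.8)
The plan is to feed the dichotomy from \cite{ES3}, recalled immediately before the statement, into the two prior results that already cover each alternative. That characterization says an algebra of generalized quaternion type whose $2$-regular Gabriel quiver has at least three vertices either (a) is socle equivalent to a weighted surface algebra $\Lambda$ distinct from the singular tetrahedral algebra $\Lambda(1)$, or (b) is isomorphic to a higher tetrahedral algebra $\Lambda(m,\lambda)$ for some $\lambda\in K^*$. Since $A$ is of generalized quaternion type it is in particular symmetric, so both alternatives are within reach of the machinery of Section 3 and of Corollary \ref{soc}. I would split the argument accordingly.

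For alternative (b) I would argue directly from Remark \ref{rem_fin1}: a higher tetrahedral algebra (necessarily with $m\geq 2$) admits a pg-critical algebra $B$ as a quotient algebra. By the inequality for quotient algebras in Lemma \ref{00}(b), together with Theorem \ref{00000}(\ref{KP}) giving $\KG(B)=\infty$, I would conclude $\KG(A)=\infty$. The excluded family $D(\lambda)^{(2)}$ plays no role in this branch.

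For alternative (a) the aim is to invoke Corollary \ref{soc}, which delivers $\KG(A)=\infty$ the moment $\Lambda$ is not isomorphic to $D(\lambda)$, $D(\lambda)^{(1)}$ or $D(\lambda)^{(2)}$. First I would note that socle equivalent algebras have the same number of simple modules, so $\Lambda$ inherits the three-vertex lower bound on $A$; this immediately removes $D(\lambda)$ and $D(\lambda)^{(1)}$, whose quivers have only two vertices. To exclude the remaining family I would use the standing hypothesis: were $\Lambda\cong D(\lambda)^{(2)}$, then $A$, being socle equivalent to $\Lambda$, would be socle equivalent to $D(\lambda)^{(2)}$, contrary to assumption. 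Hence $\Lambda$ avoids all three disc-type families and Corollary \ref{soc} applies.

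The only genuinely delicate point --- and the main obstacle in the sense of getting the statement exactly right --- is the bookkeeping of which disc-type exceptions are eliminated and how: the two-vertex algebras $D(\lambda)$ and $D(\lambda)^{(1)}$ are ruled out automatically by the three-vertex hypothesis, whereas $D(\lambda)^{(2)}$, whose quiver has four vertices, survives that count and is exactly the family that must be removed by hand. This is precisely why the hypothesis singles out $D(\lambda)^{(2)}$ and is phrased via socle equivalence rather than isomorphism. Assembling the two branches then yields $\KG(A)=\infty$ in every case.
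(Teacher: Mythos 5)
Your proposal is correct and takes essentially the same approach as the paper: it feeds the dichotomy from \cite{ES3} into the pg-critical quotient argument (Lemma \ref{00}(b) plus Theorem \ref{00000}(\ref{KP})) for the higher tetrahedral case, and into Corollary \ref{soc} for the socle-equivalence case, ruling out $D(\lambda)$ and $D(\lambda)^{(1)}$ by the preserved number of simples and $D(\lambda)^{(2)}$ directly by the hypothesis. The only cosmetic difference is that the paper splits the socle-equivalence alternative into two subcases, handling ``$A$ isomorphic to a weighted surface algebra'' separately via Theorem \ref{wsa}, whereas you absorb it into the single application of Corollary \ref{soc}; this is harmless, since that corollary covers the isomorphic case as well.
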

\begin{proof}
We have three cases to consider. Namely, if $A$ is isomorphic to a weighted surface algebra, then we apply Theorem \ref{wsa}. If $A$ is socle equivalent but non-isomorphic to a weighted surface algebra $\Lambda$ (different from the singular tetrahedral algebra), then the Grothendieck group $K_0(\Lambda)$ of $\Lambda$ is also of rank at least $3$. Therefore, $\Lambda$ is not isomorphic to disc algebras $D(\lambda)$ and $D(\lambda)^{(1)}$, for any $\lambda \in K^*$, because these algebras have the Grothendieck groups of rank 2. By assumption, $\Lambda$ is not isomorphic to $D(\lambda)^{(2)}$ either, for any $\lambda \in K^*$, and hence by Corollary \ref{soc} we get that $\KG(A)=\infty$. Finally, if $A$ is isomorphic to a higher tetrahedral algebra $\Lambda(m,\lambda)$, then by Lemma \ref{00}(b) and Theorem \ref{00000}(\ref{KP}) we have $\infty=\KG(\Gamma)\leq \KG(\Lambda(m, \lambda))=\KG(A)$, where $\Gamma$ is the quotient algebra of $A$ which is pg-critical, see Remark \ref{rem_fin1}.
\end{proof}

\begin{rmk} Observe that in view of Corollary \ref{soc}, it would be interesting to know if a symmetric algebra which is socle equivalent with $D(\lambda)^{(1)}$, $D(\lambda)^{(2)}$ or $D(\lambda)$, for $\lambda \in K^*$, has the Krull-Gabriel dimension undefined. Similarly, in view of Corollary \ref{quaternion}, it would be interesting to know if an algebra of generalized quaternion type having $2$-regular Gabriel quiver with at least $3$ vertices which is socle equivalent with $D(\lambda)^{(2)}$ has the Krull-Gabriel dimension undefined. Positive answers to these questions allow to formulate \ref{soc} and \ref{quaternion} more generally. These problems are left for further research.
\end{rmk}

\begin{rmk}\label{rem_fin2} Krause shows in \cite{Kr2} that algebras which are stable equivalent have the same Krull-Gabriel dimension. Since derived equivalence implies stable equivalence in the class of selfinjective algebras \cite{Ric}, we conclude that algebras which are derived equivalent to any of algebras considered in \ref{hybrid}, \ref{wsa}, \ref{soc} and \ref{quaternion} have the Krull-Gabriel dimension undefined. For example, in \cite{ES7} Erdmann and Skowro\'nski introduce the class of \emph{higher spherical algebras}. It follows from \cite{ES7}*{Theorem 3} that a higher spherical algebra $S(m,\lambda)$ where $m>2$ and $\lambda\in K^*$ is derived equivalent with the higher tetrahedral algebra $\Lambda(m,\lambda)$. Thus these algebras have the Krull-Gabriel dimension undefined as well. 
\end{rmk}

\end{document}